\newtheorem{example}[theorem]{Example}
\newtheorem{remark}[theorem]{Remark}
\def\ppp{{p}}
\title{A multilevel stochastic collocation method\\ for partial differential equations with random input data}
\author{
A.~L.~Teckentrup\thanks{Department of Scientific Computing,
Florida State University,
400 Dirac Science Library,  
Tallahassee, FL 32306-4120
({\tt ateckentrup@fsu.edu}).}
\and
P.~Jantsch\thanks{Department of Mathematics,
University of Tennessee,
1403 Circle Drive, 
Knoxville, TN 37916
({\tt pjantsch@utk.edu}).}
\and
C.~G.~Webster\thanks{Department of Computational and Applied Mathematics,
Oak Ridge National Laboratory, One Bethel Valley Road,
P.O.~Box 2008, MS-6164, Oak Ridge, TN 37831-6164
({\tt webstercg@ornl.gov}).}
\and
M.~Gunzburger\thanks{Department of Scientific Computing,
Florida State University,
400 Dirac Science Library,  
Tallahassee, FL 32306-4120
({\tt gunzburg@fsu.edu}).}
}
\begin{document}
\maketitle

\newcommand{\slugmaster}{%
\slugger{}{}{}{}{}}%slugger should be set to juq, siads, sifin, or siims

\renewcommand{\thefootnote}{\arabic{footnote}}

\begin{abstract}
Stochastic collocation methods for approximating the solution of partial differential equations with random input data (e.g., coefficients and forcing terms) suffer from the curse of dimensionality whereby increases in the stochastic dimension cause an explosion of the computational effort. We propose and analyze a multilevel version of the stochastic collocation method that, as is the case for multilevel Monte Carlo (MLMC) methods, uses hierarchies of spatial approximations to reduce the overall computational complexity. In addition, our proposed approach utilizes, for approximation in stochastic space, a sequence of multi-dimensional interpolants of increasing fidelity  which can then be used for approximating statistics of the solution as well as for building high-order surrogates featuring faster convergence rates. A rigorous convergence and computational cost analysis of the new multilevel stochastic collocation method is provided, demonstrating its advantages compared to standard single-level stochastic collocation approximations as well as MLMC methods. Numerical results are provided that illustrate the theory and the effectiveness of the new multilevel method.
\end{abstract}

\begin{keywords}multilevel methods, stochastic collocation, PDEs with random input data, sparse grids, uncertainty quantification, finite element methods, multivariate polynomial approximation, hierarchical methods, high-dimensional approximation
\end{keywords} 

\begin{AMS}
65C20, 65C30, 65N30, 65N35, 65M75, 65T50, 65T60
\end{AMS}

%%%%%%%%%%%%%%%%%%%%%%%%%%%%%%%%%%%
\section{Introduction}
%%%%%%%%%%%%%%%%%%%%%%%%%%%%%%%%%%%

Nowadays, mathematical modeling and computer simulations are used extensively in many scientific and engineering fields, usually with the goal of understanding or predicting the behavior of a system given its inputs such as the computational domain, model parameter values, and source terms. However, whether stemming from incomplete or inaccurate knowledge or from some inherent variability in the system, often these inputs may be subject to uncertainty. In order to correctly predict the behavior of the system, it is especially pertinent to understand and propagate the effect of the input uncertainty to the output of the simulation, i.e., to the solution of the mathematical model.

In this paper, we consider systems which are modeled by elliptic partial differential equations (PDEs) with random input data. We work under the \emph{finite-dimensional noise assumption}, i.e., we assume that the random inputs are characterized by a finite-dimensional random vector. When enough information is available to completely characterize the randomness in the inputs, probability theory provides a natural setting for quantifying uncertainties. The object of our computations is the accurate calculation of solution of stochastic elliptic PDEs or statistics of some functional of the solution of the PDE. For instance, in addition to the solution itself, one might be interested in the expected value or variance of the solution in a given region of the computational domain.

A large number of methods have been developed for the numerical solution of PDEs with random inputs; see, e.g., \cite{GWZ14} and the references cited therein. The most popular approach is the Monte Carlo (MC) methods which involves random sampling of the input vector of random variables (also referred to as the \emph{stochastic parameter space}) and the solution of the deterministic PDE at each of the sample points. In addition to the benefits of simple implementation and a natural decoupling of the stochastic and spatial degrees of freedom, MC methods feature a convergence rate that is independent of the dimension of the stochastic space. This makes it particularly attractive for high-dimensional problems. However, the convergence is in general very slow and, especially in case the stochastic space is only of moderate dimension and the solution of the PDE or of a functional of interest is smooth, better convergence rates can be achieved using more sophisticated methods.

Stochastic collocation (SC) methods~\cite{Babuska:2007,Nobile:2008_1,Nobile:2008_2} are similar to MC methods in the sense that they involve only the solution of a sequence of deterministic PDEs at given sample points in the stochastic space. However, rather than randomly chosen samples, SC methods use a deterministic grid of points at which one then solves the corresponding deterministic PDE, and then builds an interpolant, either using global Lagrange-type polynomials \cite{Babuska:2007,Nobile:2008_1,Nobile:2008_2} or even local hierarchical basis functions \cite{GWZ13,Ma:2009p4298}. For problems where the solution is a smooth function of the random input variables and the dimension of the stochastic space is moderate, SC methods have been shown to converge much faster than MC methods. 

Unfortunately, for most problems, stochastic collocation methods suffer from the  \emph{curse of dimensionality}, a phrase that refers to the deterioration of the convergence rate and the explosion of computational effort as the dimension of the stochastic space increases.   
In this paper, we introduce a multilevel stochastic collocation (MLSC) approach for reducing the computational cost incurred by standard, i.e., single level, SC methods. Drawing inspiration from multigrid solvers for linear equations, the main idea behind multilevel methods is to utilize a hierarchical sequence of spatial approximations to the underlying PDE model that are then combined with stochastic discretizations in such a way as to minimize computational cost. Starting with the pioneering works \cite{Heinrich:2001} in the field of integral equations and  \cite{Giles:2008} in the field of computational finance, the multilevel approach has been successfully applied to many applications of MC methods; see, e.g., \cite{Barth:2011gz,Charrier:2013jv,dh11,gkss11,gr12,hsst12}. The MLSC methods we consider in this paper are similar to the multilevel quadrature schemes studied in \cite{Harbrecht:2013}. However, our focus is on the analysis of the computational complexity of the multilevel algorithms and also includes results for functionals of the solution. In particular, we prove new interpolation error bounds on functionals of the solution that are needed for the analysis of the MLSC methods. 

The outline of the paper is as follows. In Section \ref{sec:problemsetting}, we introduce the mathematical problem, the main notation used throughout, the assumptions on the
parametrization of the random inputs that are used to transform the original stochastic problem into a deterministic parametric version, and needed assumptions about the regularity of the solution of the PDE. A description of the spatial and stochastic approximations as well as the formulation of the MLSC method follows in Section \ref{sec:form}. In Section \ref{sec:analysis}, we provide a general convergence and complexity analysis for the MLSC method. As an example of a specific single level SC approach satisfying our interpolation assumptions, we describe, in Section \ref{sec:SC}, a generalized sparse grid stochastic collocation approach based on global Lagrange interpolation. In Section \ref{sec:num}, we provide numerical results that illustrate the theoretical results and complexity estimates and also explore issues related to the implementation of the MLSC method.

%%%%%%%%%%%%%%%%%%%%%%%%%%%%%%%%%%%
\section{Problem Setting}\label{sec:problemsetting}
%%%%%%%%%%%%%%%%%%%%%%%%%%%%%%%%%%

Consider the problem of approximating the solution of an elliptic partial differential equation (PDE) with random input data. To this end, let $D \subset \mathbb{R}^d,$ $d=1,2,3$, denote a bounded, Lipschitz domain with boundary denoted by $\partial D$ and let $(\Omega,\mathscr{F},\mathbb{P})$ denote a complete probability space. Here, $\Omega$ denotes the set of outcomes, $\mathscr{F} \subset 2^{\Omega}$ the $\sigma$-algebra of events, and $\mathbb{P}:\mathscr{F} \rightarrow [0,1]$ a complete probability measure. Given random fields $a(\omega,\mathbf{x}), f(\omega,\mathbf{x}) : \Omega\times \overline{D} \rightarrow \mathbb{R}$, the model problem we consider is stated as follows: find $u(\omega,\mathbf{x}) : \Omega\times \overline{D} \rightarrow \mathbb{R}$ such that almost surely
\begin{equation}
\label{eq:mod}
\left\{
\begin{array}{rll}
-\nabla(a(\omega,\mathbf{x}) \cdot \nabla u(\omega,\mathbf{x})) &= f(\omega,\mathbf{x}) &\mbox{in } D\\
					u(\omega,\mathbf{x}) &= 0  &\mbox{on } \partial D.
\end{array}
\right.
\end{equation}

We make the following assumptions on $a$ and $f$:
\begin{itemize}
\item[{\bf {\bf A1}.}] (\emph{Finite-dimensional noise}) The random fields $a$ and $f$ are determined by a finite number $N$ of random variables, denoted by the random vector
$\bm{y}(\omega) := [ y_1(\omega), \ldots y_N(\omega) ] : \Omega \rightarrow \mathbb{R}^N$.  

\item[{\bf {\bf A2}.}] (\emph{Boundedness}) The image $\Gamma_n := y_n(\Omega)$ of $y_n$ is bounded for all $n \in \{1,\dots, N\}$ and, with $\Gamma = \prod_{n=1}^{N} \Gamma_n$, the random variables $\bm y$ have a joint probability density function $\rho({\bm y})= \prod_{n=1}^{N}\widetilde\rho(y_n)\in L^{\infty}(\Gamma)$, where $\widetilde\rho(\cdot):[-1,1]\to\mathbb{R}$ denotes the one-dimensional PDF corresponding to the probability space of the random fields. Without loss of generality, we assume that $\Gamma = [-1,1]^N$.
\end{itemize}

\begin{remark}
\ Another setting having a finite number of random variables is that of the coefficient $a$ and the forcing function $f$ depending on a a finite number of independent scalar random physical parameters, e.g., diffusivities, reaction rates, porosities, elastic moduli, etc. In this case, each of the $N$ parameters would have its own PDF $\rho_n(y_n)$, $n=1,\ldots,N$, so that the joint PDF is now given by $\rho({\bm y})= \prod_{n=1}^{N}\rho_n(y_n)$. The algorithms discussed in this work all apply equally well to this setting.
\end{remark}

Under assumptions {\bf A1} and {\bf A2}, it follows from the Doob-Dynkin Lemma that the solution $u$ to \eqref{eq:mod} can also be characterized in terms of the random vector $\bm y(\omega)$. The solution $u(\omega,\mathbf{x})$ thus has a deterministic, parametric equivalent $u(\bm y,\mathbf{x})$, with the probability space $(\Gamma, \mathcal B, \rho(\bm y) \mathrm{d} \bm y)$ taking the place of $(\Omega,\mathscr{F},\mathbb{P})$; see, e.g., \cite{Babuska:2007}. Here, $\mathcal B$ denotes the Borel $\sigma$-algebra generated by the open subsets of $\Gamma$. In what follows, we will therefore denote the solution by $u(\bm y,\mathbf{x})$ for $\bm y \in \Gamma$ and $\mathbf{x} \in D$. Then we also assume:
\begin{itemize}
\item[{\bf {\bf A3}.}] (\emph{Existence and uniqueness}) The coefficients $a(\omega, \mathbf{x})$ are uniformly bounded and coercive, i.e., there exists $a_{min}>0 \text{ and } a_{max}<\infty \text{ such that } $ 
$$
\mbox{Prob}\left[\omega\in\Omega:\,\, 
a_{min}\leq a(\bm{y}(\omega),\mathbf{x})\leq a_{max}\,\,\,\forall\, \mathbf{x}\in\overline{D}\right]=1
$$
and $f\in L^2_{\rho}(\Gamma; H^{-1}(D))$ so that the problem \eqref{eq:mod} admits a unique solution $u \in L^2_\rho(\Gamma; H^1_0(D))$ with realizations in $H_0^1(D)$, i.e., $u(\bm{y}(\omega),\cdot)\in H^1_0(D)$ almost surely.
\end{itemize}

\noindent Here, given a Banach space $X(D)$ of functions on $D$, the weighted Bochner spaces $L^q_\rho(\Gamma; X(D))$ for $1 \leq q < \infty$ are defined by
$$
	L_{\rho}^q(\Gamma;X(D)) = \Big\{ v:\Gamma \rightarrow X(D)\ | \ v \text{ is measurable and } \int_{\Gamma} \|v(\bm y,\cdot)\|^q_{X(D)} \rho(\bm y) \mathrm{d}\bm y < \infty \Big\}
$$
with corresponding norm $\|\cdot\|_{L_{\rho}^q(\Gamma;X(D))}$ given by
$$
\|v\|^q_{L_{\rho}^q(\Gamma;X(D))} = \int_{\Gamma} \|v(\bm y,\cdot)\|^q_{X(D)} \rho(\bm y) \mathrm{d}\bm y.
$$

Assumption {\bf A1} is naturally satisfied by random fields that only depend on a finite set of parameters, e.g.,
\begin{equation*}
a(\omega, \mathbf{x})= a(\bm{y}(\omega), \mathbf{x}) =  a_0 + \sum_{n=1}^N y_n(\omega) a_n(\mathbf{x}),
\quad \{a_n\}_{n=1}^N\in L^2(D),
\end{equation*}
and similarly for $f(\omega, \mathbf{x}) \approx f(\bm{y}(\omega),\mathbf{x})$, where $\bm{y}(\omega)$ is a vector of independent random variables.  If this is not the case, approximations of $a$ and $f$ that satisfy assumption {\bf A1} can be obtained by appropriately truncating a spectral expansion such as the Karhunen-Lo\`eve expansion  \cite{Ghanem_Spanos_1991}. This introduces an additional error; see \cite{Nobile:2008_1} for a discussion of the effect of this error on the convergence of stochastic collocation methods and \cite{fst05, charrier12} for bounds on the truncation error.

Assumption {\bf A2} can be weakened to include the case of unbounded random variables such as Gaussian variables. See \cite{Babuska:2007} for an analysis of the interpolation error and note that, with only minor modifications, the multilevel stochastic collocation method introduced in this paper also applies to unbounded random variables. Furthermore, assumption {\bf A3} can be weakened to include coefficients $a$ that are not uniformly coercive; see \cite{Charrier:2013jv, tsgu13}. 

Finally, we remark that the multilevel stochastic collocation method proposed in this paper is not specific to the model problem \eqref{eq:mod}; it can be applied also to higher-order PDEs and other types of boundary conditions.

%%%%%%%%%%%%%%
\section{Hierarchical multilevel stochastic collocation methods}
\label{sec:form}
We begin by recalling that standard stochastic collocation (SC) methods generally build an approximation of the solution $u$ by evaluating a spatial approximation $u_h(\bm{y}, \cdot)\in V_h$ at a given set of points $\{{\bm y}_m\}_{m=1}^{M}$ in $\Gamma$, where $V_h\subset H_0^1(D)$ is a finite-dimensional subspace. In other words, we compute $\{u_h(\bm{y}_m, \cdot)\}_{m=1}^{M}$.  Then, given a basis 
$\left\{\phi_m({\bm y})\right\}_{m=1}^{M}$ for the space $\mathcal{P}_M=\mbox{span}\left\{\phi_m({\bm y})\right\}_{m=1}^{M} \subset L^2_{\rho}(\Gamma)$, we use those samples to construct the fully discrete approximation given by the interpolant
\begin{equation}\label{def:stdsc}
u^\mathrm{(SL)}_{M,h}(\bm y, \mathbf x)=
\mathcal{I}_M[u_h](\bm y, \mathbf{x}) =  \sum_{m=1}^{M} c_m(\mathbf{x}) \, \phi_m({\bm y}),
\end{equation}
where the coefficients $c_m(\mathbf x)$ are determined by the interpolating conditions $\mathcal{I}_M[u_h](\bm y_m, \mathbf{x})  = u_h(\bm y_m,\mathbf x)$ for $m=1, \dots, M$. In \eqref{def:stdsc}, we label the standard SC approximation by `SL' to indicate that that approximation is constructed using a single set of points $\{{\bm y}_m\}_{m=1}^{M}$ in stochastic space, in contrast to the multilevel approximations considered below that use a hierarchy of point sets; thus, we refer to \eqref{def:stdsc} as a {\em single level} approximation.
A wide range of choices for the interpolation points $\{{\bm y}_m\}_{m=1}^{M}$ and basis functions $\{\phi_m({\bm y})\}_{m=1}^{M}$ are possible. A particular example
of the approximation \eqref{def:stdsc}, namely global Lagrange interpolation on generalized sparse grids, is given in Section \ref{sec:SC}.

Convergence of the SC approximation \eqref{def:stdsc} is often assessed in the natural $L_{\rho}^2(\Gamma;H^1_0(D))$-norm, and the goal is to determine a bound on the error 
$\|u - \mathcal{I}_M[u_h]|_{L_{\rho}^2(\Gamma;H^1_0(D))}$. To obtain a good approximation with SC methods, it is necessary in general to use accurate spatial approximations $u_h$ and a large number $M$ of collocation points. To determine the coefficients $c_m(\mathbf{x})$ of the interpolant \eqref{def:stdsc}, the method requires the computation of $u_h(\bm y_m, \cdot )$ for $m=1,\ldots,M$ so that, in practice, the cost can grow quickly with increasing $N$. Therefore, to reduce the overall cost, we consider a multilevel version of SC methods that combines different levels of fidelity of both the spatial and parameter approximations. 

%------------------------------------------------------------------------------------
\subsection{Spatial approximation} \label{ssec:appr}

For spatial approximation, we use hierarchical family of finite element discretizations \cite{brenner_scott, ciarlet}. As discussed in \cite{Harbrecht:2013}, the formulation of the multilevel method does not depend on the specific spatial discretization scheme used and the results readily hold for other choices. For $k \in \mathbb N_0$, define a hierarchy of nested finite element spaces
\begin{equation*}
	V_{h_0} \subset V_{h_1} \subset \dots \subset V_{h_k} \subset \dots \subset H^1_0(D),
\end{equation*}
where each $V_{h_k}$ consists of continuous, piecewise polynomial functions on a shape regular triangulation $\tau_{h_k}$ of $D$ having maximum mesh spacing parameter $h_k$. Note that $k$ merely serves to index the given spaces; the approximation properties of the space $V_{h_k}$ is governed by $h_k$. For simplicity, we assume that the triangulations $\{\tau_{h_k}\}_{k \in \mathbb N_0}$ are generated by iterative uniform subdivisions of the initial triangulation $\tau_0$; this implies that $h_k = \eta^{-k}h_0$ for some $\eta \in \mathbb N$, $\eta>1$ and that indeed the corresponding finite element spaces are nested. 

\begin{remark}\ For simplicity, we have assumed that the finite element family of spaces is nested, and in fact, are constructed by a series of uniform subdivisions of a parent mesh with mesh size $h_0$. Neither of these assumptions are necessary for our algorithms or conclusions to hold, provided $\eta_1 \leq h_k/h_{k+1} \leq \eta_2$ for some $0 < \eta_1 < \eta_2 < \infty$ and all $k \in \mathbb N_0$; in such cases, the finite element spaces are not necessarily nested.
\end{remark}

We also let $u_{h_k}(\bm y,\cdot)$ denote the Galerkin projection of $u(\bm y,\cdot)$ onto $V_{h_k}$, i.e., $u_{h_k}\in V_{h_k}$ denotes the finite element approximation. Note that $u_{h_k}({\bm y},\cdot)$ is still a function on the stochastic parameter space $\Gamma$. We assume the following approximation property of the finite element spaces $\{V_{h_k}\}_{k \in \mathbb N_0}$:

\begin{itemize}
\item[{\bf {\bf A4}.}] There exist positive constants $\alpha$ and $C_s$, independent of $h_k$, such that for all $k \in \mathbb N_0$,
$$ 
\|u-u_{h_k}\|_{L_{\rho}^2(\Gamma;H^1_0(D))} \leq C_s \, h_k^{\alpha}.
$$
\end{itemize}
In general, the rate $\alpha$ depends on the (spatial) regularity of $u$, which in turn depends on the regularity of $a$ and $f$ as well as on the geometry of the domain $D$. For example, if $a$, $f$, and $D$ are sufficiently regular so that $u \in L^2_{\rho}(\Gamma; H^2(D))$, assumption {\bf A4} holds with $\alpha =1$ and $C_s$ dependent only on $a$ and $\|u\|_{L^2_{\rho}(\Gamma; H^2(D))}$. For additional examples and detailed analyses of finite element errors, see \cite{tsgu13}.

\subsection{Stochastic interpolation} \label{ssec:sto_int}

For stochastic approximation, we use interpolation over $\Gamma$, where we assume $u \in C^0(\Gamma;H^1_0(D))$. The specific choice of interpolation scheme is not crucial at this juncture. We begin by letting $\left\{ \mathcal{I}_{M_k} \right\}_{k=0}^{\infty}$ denote a sequence of interpolation operators $\mathcal I_{M_k} : C^0(\Gamma) \rightarrow L^2_\rho(\Gamma)$ using $M_k$ points. We assume the following:

\begin{itemize}
\item[{\bf {\bf A5}.}] There exist positive constants $C_I, C_\zeta$, $\beta$, and $\mu$, and a Banach space $\Lambda(\Gamma;H^1_0(D)) \subset L^2_\rho(\Gamma;H^1_0(D))$ containing the finite element approximations $\{u_{h_k}\}_{k \in \mathbb N_0}$ such that for all $v \in \Lambda(\Gamma;H^1_0(D))$ and all $k \in \mathbb N_0$
$$
\|v-\mathcal{I}_{M_k} v\|_{L_{\rho}^2(\Gamma;H^1_0(D))} \leq \, C_I \, \sigma_k \, \zeta(v)
$$
for some function $\zeta : \Lambda(\Gamma;H^1_0(D)) \rightarrow \mathbb R$ and a decreasing sequence $\sigma_k$ that admit the estimates 
$$
\zeta(u_{h_k}) \leq C_\zeta \, h_0^\beta \quad \text{ and } \quad \zeta(u_{h_{k+1}} - u_{h_{k}}) \leq C_\zeta \, h_{k+1}^\beta.
$$
\end{itemize}

\begin{remark}\ 
As in the previous section, $k$ is merely an index; we use the same index for the hierarchies of spatial and stochastic approximations because, in the multilevel SC method we introduce below, these two hierarchies are closely connected. 
\end{remark}

\begin{remark}\ 
$\sigma_k$ determines the approximation properties of the interpolant. Moreover, we allow non-unique interpolation operators in the sequence, i.e., it is possible that, for any $k =0,\ldots, \infty$, $M_{k+1}=M_k$ and therefore $\mathcal{I}_{M_{k+1}}=\mathcal{I}_{M_k}$ and $\sigma_{k+1} = \sigma_k $. Thus, although the spatial approximation improves with increasing $k$, i.e., $h_{k+1}<h_k$, we allow for the parameter space approximation for the index $k+1$ remaining the same as that for $k$. 
\end{remark}

In Section \ref{sec:SC}, assumption {\bf A5} is shown to hold, with $\sigma_k = M_k^{-\mu}$, for global Lagrange interpolation using generalized sparse grids. The bounds on the function $\zeta$ in assumption {\bf A5} are shown to be the key to balancing spatial and stochastic discretizations through the multilevel formulation.  Crucially, we make use of the fact that the interpolation error is proportional to the size of the function being interpolated, measured in an appropriate norm. In the case of the model problem \eqref{eq:mod}, this norm is usually related to the (spatial) $H^1_0(D)$-norm. The bounds in assumption {\bf A5} then arise from the fact that for any $k \in N_0$, $\|u_{h_k}\|_{H^1_0(D)}$ is bounded by a constant, independent of $k$, whereas $\|u_{h_k}-u_{h_{k-1}}\|_{H^1_0(D)}$ decays with $h_k^\beta$ for some $\beta>0$. We usually have $\beta = \alpha$, where $\alpha$ is as in assumption {\bf A4}. Note that we have chosen to scale the bound on $\zeta(u_{h_k})$ by $h_0^\beta$ to simplify calculations. Because $h_0$ is a constant, this does not affect the nature of the assumption.

%-------------------------------------------------------------------------------------
\subsection{Formulation of the multilevel method} \label{ssec:formulation}

As in the previous sections, denote by $\{u_{h_k}\}_{k \in \mathbb N_0}$ and $\{\mathcal I_{M_k}\}_{k \in \mathbb N_0}$ sequences of spatial approximations and interpolation operators in parameter space, respectively. Then, for any $K \in \mathbb N$, the formulation of the multilevel method begins with the simple telescoping identity
\begin{equation} \label{eqn:MLform}
	u_{h_K} = \sum_{k=0}^{K} (u_{h_k} - u_{h_{k-1}}),
\end{equation}
where, for simplicity, we set $u_{h_{-1}}:=0$.

It follows from assumption {\bf A5} that as $k \rightarrow \infty$, less accurate interpolation operators are needed in order to estimate $u_{h_k} - u_{h_{k-1}}$ to achieve a required accuracy. We therefore define our multilevel interpolation approximation as
	\begin{equation}\label{def:ml_app}
		u_K^\mathrm{(ML)} := \sum_{k=0}^{K} \mathcal{I}_{M_{K-k}}[u_{h_k} - u_{h_{k-1}}] \\
		= \sum_{k=0}^{K}\Big( u^\mathrm{(SL)}_{M_{K-k},h_k} - u^\mathrm{(SL)}_{M_{K-k},h_{k-1}}\Big). 
	\end{equation}
Rather than simply interpolating $u_{h_K}$, this approximation uses different levels of interpolation on each difference $u_{h_k} - u_{h_{k-1}}$ of finite element approximations. To preserve convergence, the estimator uses the most accurate interpolation operator $\mathcal I_{M_K}$ on the coarsest spatial approximation $u_{h_0}$ and the least accurate interpolation operator $\mathcal I_{M_0}$ on the finest spatial approximation $u_{h_K} - u_{h_{K-1}}$. Note that in \eqref{def:ml_app} a single index $k$ is used to select appropriate spatial and stochastic approximations and thus these approximations are indeed closely related.

%%%%%%%%%%%%%%%%%%%%%%%%%%%%%%%%%%%
\section{Analysis of the multilevel approximation} \label{sec:analysis}

This section is devoted to proving the convergence of the multilevel approximation defined in Section \ref{ssec:formulation} and analyzing its computational complexity. We first prove, in Section \ref{ssec:convergence}, a general error bound, whereas in Sections \ref{ssec:comp_analysis} and \ref{ssec:comp_analysis_func} we prove a bound on the computational complexity in the particular case of an algebraic decay of the interpolation errors.

%%%%%%%%%%%%%%%%%%%%%%%%%%%%%%%%%%%
\subsection{Convergence analysis} \label{ssec:convergence}

We consider the convergence of the multilevel approximation $u_K^\mathrm{(ML)}$ to the true solution $u$ in the natural norm $\|\cdot\|_{L_{\rho}^q(\Gamma;H_0^1(D))}$.

First, we use the triangle inequality to split the error into the sum of a spatial discretization error and a stochastic interpolation error, i.e., 
\begin{equation}\label{eq:err_split}
\|u-u_K^\mathrm{(ML)}\|_{L_{\rho}^2(\Gamma;H^1_0(D))} \leq {\underbrace{\|u - u_{h_K}\|}_{(I)}} { }_{L_{\rho}^2(\Gamma;H^1_0(D))} + \underbrace{\|u_{h_K} - u_K^\mathrm{(ML)}\|}_{(II)} { }_{L_{\rho}^2(\Gamma;H^1_0(D))}.
\end{equation}
The aim is to prove that with the interpolation operators $\{\mathcal I_{M_k}\}_{k=0}^K$ chosen appropriately, the stochastic interpolation error ($II$) of the multilevel approximation converges at the same rate as the spatial discretization error ($I$), hence resulting in a convergence result for the total error.

For the spatial discretization error ($I$), it follows immediately from assumption {\bf A4} that
$$
	(I) \leq C_s h_K^{\alpha}.
$$

From \eqref{eqn:MLform}, the triangle inequality, and assumption {\bf A5}, we estimate the stochastic interpolation error:
$$
\begin{aligned}
	(II) &= \Big\|\sum_{k=0}^{K} (u_{h_k} - u_{h_{k-1}}) -  \mathcal{I}_{M_{K-k}}(u_{h_k} - u_{h_{k-1}})\Big\|_{L_{\rho}^2(\Gamma;H^1_0(D))}\\
	&\leq \sum_{k=0}^{K} \big\|(u_{h_k} - u_{h_{k-1}}) -  \mathcal{I}_{M_{K-k}}(u_{h_k} - u_{h_{k-1}})\big\|_{L_{\rho}^2(\Gamma;H^1_0(D))}\\
	&\leq \sum_{k=0}^{K} C_I \, C_\zeta \, \sigma_{K-k} \, h_k^\beta.
\end{aligned}
$$
To obtain an error of the same size as $(I)$, we choose interpolation operators such that 
\begin{equation} \label{eqn:epsk}
	\sigma_{K-k} \leq C_s \, \big((K+1)\, C_I \, C_\zeta\big)^{-1} \, h_K^\alpha \, h_k^{-\beta}.
\end{equation}
Continuing from above, 
$$
 (II) \leq \sum_{k=0}^{K} C_s \, \big((K+1)\, C_I \, C_\zeta)\big)^{-1} \, h_K^\alpha \, h_k^{-\beta} C_I \, C_\zeta \, h_k^\beta
	= C_s h_K^\alpha, 
$$
as required. It follows that with $\sigma_k$ as in \eqref{eqn:epsk}  
$$
\|u-u_K^\mathrm{(ML)}\|_{L_{\rho}^2(\Gamma;H^1_0(D))} \leq 2 \, C_s \, h_K^\alpha.
$$

%----------------------------------------------------------------------------
\subsection{Cost analysis} \label{ssec:comp_analysis}
{
We now proceed to analyze the computational cost of the MLSC method. We consider the {\em $\varepsilon$-cost of the estimator}, denoted here by $C_\varepsilon^\mathrm{ML}$, which is the computational cost required to achieve a desired accuracy $\varepsilon$. In order to quantify this cost, we use the convergence rates of the spatial discretization error and, for the stochastic interpolation error, the rates given by assumptions {\bf A4} and {\bf A5}. In particular, we will assume that {\bf A5} holds with $\sigma_k = M_k^{-\mu}$ for some $\mu > 0$.

\begin{remark}\
The choice $\sigma_k = M_k^{-\mu}$ best reflects approximations based on SC methods that employ sparse grids. In particular, as mentioned in Section \ref{ssec:sto_int}, algebraic decay holds for the generalized sparse grid interpolation operators considered in Section~\ref{sec:SC}; see Theorem~\ref{theorem:SCerror}. For other possible choices in the context of quadrature, see \cite{Harbrecht:2013}.
\end{remark}

In general, the MLSC method involves solving, for each $k$, the deterministic PDE for each of the $M_k$ sample points from $\Gamma$; in fact, according to \eqref{def:ml_app}, two solves are needed,  one for each of two spatial grid levels. Thus, we also require a bound on the cost, which we denote by $C_k$, of computing $u_{h_k} - u_{h_{k-1}}$ at a sample point. We assume:
\begin{itemize}
\item[{\bf {\bf A6}.}] There exist positive constants $\gamma$ and $C_c$, independent of $h_k$, such that $C_k \leq C_c \, h_k^{-\gamma}$ for all $k \in \mathbb N_0$.
\end{itemize} 

\noindent If an optimal linear solver is used to solve the finite element equations for $u_{h_k}$, this assumption holds with $\gamma \approx d$ (see, e.g., \cite{brenner_scott}), where $d$ is the spatial dimension. Note that the constant $C_c$ will in general depend on the refinement ratio $\eta$ described in Section 
\ref{ssec:appr}.

We quantify the total computational cost of the MLSC approximation \eqref{def:ml_app} using the metric
\begin{equation}\label{eq:cost} 
C^\mathrm{(ML)} = \sum_{k=0}^K M_{K-k} \, C_k.
\end{equation}

We now have the following result for the $\varepsilon$-cost of the MLSC method required to achieve an accuracy $\|u- u_K^\mathrm{(ML)}\|_{L_{\rho}^2(\Gamma;H^1_0(D))} \leq \varepsilon$.

\begin{theorem}
\label{theorem:main}
Suppose assumptions {\bf A4}--{\bf A6} hold with $\sigma_k = M_k^{-\mu}$, and assume that $\alpha \geq \min(\beta,\mu \gamma)$. Then, for any $\varepsilon < \exp[-1]$, there exists an integer $K$ such that
$$\|u-u_K^\mathrm{(ML)}\|_{L_{\rho}^2(\Gamma;H^1_0(D))} \leq \varepsilon $$
and
\begin{equation}
C_\varepsilon^\mathrm{(ML)} \lesssim 
	\begin{cases}
		\varepsilon^{-\frac{1}{\mu}}, & \text{if } \beta > \mu \gamma \\[1ex]
		\varepsilon^{-\frac{1}{\mu}}|\log \varepsilon |^{1 + \frac{1}{\mu}}  & \text{if } \beta = \mu \gamma \\[1ex]
		\varepsilon^{-\frac{1}{\mu}-\frac{\gamma \mu - \beta}{\alpha \mu}}  & \text{if } \beta < \mu \gamma. 
	\end{cases}
\end{equation}
\end{theorem}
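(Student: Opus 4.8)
The plan is to adapt the multilevel Monte Carlo complexity argument (as in Cliffe--Giles--Scheichl--Teckentrup) to the interpolation setting provided by assumptions {\bf A4}--{\bf A6}. First I would fix the number of levels $K$. Since the convergence analysis of Section~\ref{ssec:convergence} already gives $\|u - u_K^\mathrm{(ML)}\|_{L^2_\rho(\Gamma;H^1_0(D))} \le 2\,C_s\,h_K^\alpha$ whenever the interpolation tolerances satisfy \eqref{eqn:epsk}, it suffices to take $K$ to be the smallest integer for which $2\,C_s\,h_K^\alpha \le \varepsilon$. Using $h_k = \eta^{-k}h_0$, the minimality of $K$ yields $h_K^\alpha \sim \varepsilon$, hence $h_K \sim \varepsilon^{1/\alpha}$ and $K+1 \sim |\log\varepsilon|$; these are the two relations I substitute at the very end. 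The only remaining freedom is how many collocation points $M_{K-k}$ to assign to each spatial difference $u_{h_k}-u_{h_{k-1}}$.

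Second, rather than the uniform split \eqref{eqn:epsk} (which already guarantees convergence but injects a spurious factor $(K+1)^{1/\mu}$, i.e.\ a logarithm, into the first regime), I would choose the $M_{K-k}$ so as to minimize the cost \eqref{eq:cost} subject to the single requirement that the interpolation error $(II) = \sum_{k=0}^K C_I\,C_\zeta\,M_{K-k}^{-\mu}\,h_k^\beta$ stay bounded by $C_s\,h_K^\alpha$, so that the total error remains $\le 2\,C_s\,h_K^\alpha \le \varepsilon$. Treating the $M_{K-k}$ as continuous and applying a Lagrange multiplier to the cost $\sum_k M_{K-k}\,C_c\,h_k^{-\gamma}$ against this constraint, the optimum is $M_{K-k} \sim h_k^{(\beta+\gamma)/(\mu+1)}$, with the constant of proportionality fixed by saturating the constraint. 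I would then round up to the nearest integer, $M_{K-k} = \lceil \cdots \rceil$, which ensures $M_{K-k}\ge 1$ and overshoots the continuous optimum by at most one.

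Third, substituting this allocation, both the constraint and the cost collapse to the \emph{same} geometric sum $S := \sum_{k=0}^K h_k^{(\beta-\mu\gamma)/(\mu+1)}$, because the exponents $\tfrac{\beta+\gamma}{\mu+1}-\gamma$ (in the cost) and $\beta - \tfrac{\mu(\beta+\gamma)}{\mu+1}$ (in the constraint) both equal $\tfrac{\beta-\mu\gamma}{\mu+1}$. Eliminating the constant gives
\[
C_\varepsilon^{\mathrm{(ML)}} \;\lesssim\; h_K^{-\alpha/\mu}\,S^{\,1+1/\mu} \;+\; \sum_{k=0}^{K} C_c\,h_k^{-\gamma},
\]
with $h_K^{-\alpha/\mu}\sim \varepsilon^{-1/\mu}$. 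The sum $S$ is geometric in $\eta^{-k}$ and its behaviour splits into exactly the three regimes of the theorem according to the sign of $\beta - \mu\gamma$: it is bounded (dominated by $k=0$) when $\beta > \mu\gamma$, equals $K+1$ when $\beta = \mu\gamma$, and is of order $h_K^{-(\mu\gamma-\beta)/(\mu+1)}$ (dominated by $k=K$) when $\beta < \mu\gamma$. Feeding in $h_K \sim \varepsilon^{1/\alpha}$ and $K+1 \sim |\log\varepsilon|$ then reproduces the three stated rates for the first term.

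The main obstacle is the second (rounding) term $\sum_k C_c\,h_k^{-\gamma}$, which is dominated by the finest level and is therefore of order $h_K^{-\gamma}\sim \varepsilon^{-\gamma/\alpha}$; one must check it never exceeds the first term. Comparing exponents, in the regime $\beta > \mu\gamma$ this needs $\gamma/\alpha \le 1/\mu$, i.e.\ $\alpha \ge \mu\gamma$, while in the regime $\beta < \mu\gamma$ it needs $\alpha \ge \beta$ (and the two coincide when $\beta = \mu\gamma$). Since $\min(\beta,\mu\gamma)$ selects precisely the binding case, both conditions are subsumed by the hypothesis $\alpha \ge \min(\beta,\mu\gamma)$ --- this is exactly where that assumption is used, and verifying it is the only genuinely delicate bookkeeping step. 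Finally, the restriction $\varepsilon < \exp[-1]$ merely guarantees $|\log\varepsilon| \ge 1$, rendering the logarithmic factors harmless and ensuring a valid integer $K$ exists.
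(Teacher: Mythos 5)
Your proposal is correct and follows essentially the same route as the paper: fix $K$ from the spatial error via {\bf A4}, determine the $M_{K-k}$ by a Lagrange-multiplier minimization of the cost subject to the interpolation-error constraint (yielding $M_{K-k}\sim h_k^{(\beta+\gamma)/(\mu+1)}$ up to rounding), observe that cost and constraint collapse to the same geometric sum, and split into the three regimes by the sign of $\beta-\mu\gamma$, using $\alpha\ge\min(\beta,\mu\gamma)$ exactly to absorb the $\varepsilon^{-\gamma/\alpha}$ rounding term. The paper's proof is the same argument with the constants tracked explicitly.
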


\begin{proof}
As in \eqref{eq:err_split}, we consider separately the two error contributions $(I)$ and $(II)$. To achieve the desired accuracy, it is sufficient to bound both error contributions by $\frac{\varepsilon}{2}$. Without loss of generality, for the remainder of this proof we assume $h_0=1$. If this is not the case, we simply need to rescale the constants $C_s$, $C_\zeta$, and $C_c$.

First, we choose $K$ large enough so that $(I) \leq \frac{\varepsilon}{2}$. By assumption {\bf A4}, it is sufficient to require $ C_s h_K^{\alpha} \leq \frac{\varepsilon}{2}$. Because the hierarchy of meshes $\{h_k\}_{k \in \mathbb N_0}$ is obtained by uniform refinement, $h_k = \eta^{-k} h_0 = \eta^{-k}$, and we have
\begin{equation}\label{def:K}
 	h_K \leq \big(\frac{\varepsilon}{2 C_s}\big)^{1/\alpha} \quad\text{if}\quad K = \left\lceil \frac{1}{\alpha} \log_\eta \big( \frac{2 C_s}{\varepsilon}\big)\right\rceil.
 \end{equation}
This fixes the total number of levels $K$. 

In order to obtain the multilevel estimator with the smallest computational cost, we now determine the $\{M_k\}_{k=0}^K$ so that the computational cost \eqref{eq:cost} is minimized, subject to the requirement $ (II)  \leq \frac{\varepsilon}{2}$. Treating the $M_k$ as continuous variables, the Lagrange multiplier method, together with assumptions {\bf A4} and {\bf A5}, results in the optimal choice 
\begin{equation}\label{eq:opt_eta}
 M_{K-k} 
= \big(2 \, C_I \, C_\zeta \, \mathcal{S}(\eta,K)\big)^{1/\mu} \, \varepsilon^{-1/\mu} \, \eta^{-\frac{k(\beta + \gamma)}{\mu + 1}}, 
\end{equation}
where
$$
	\mathcal{S}(\eta,K) = \sum_{k=0}^{K} \eta^{-k(\frac{\beta - \gamma\mu}{\mu+1})}.
$$ 
Because $M_{K-k}$ given by \eqref{eq:opt_eta} is, in general, not an integer, we choose
\begin{equation}\label{eq:opt_etar}
 M_{K-k} 
= \left\lceil\left(2 \, C_I \, C_\zeta \, \mathcal{S}(\eta,K)\right)^{1/\mu} \, \varepsilon^{-1/\mu} \, \eta^{-\frac{k(\beta + \gamma)}{\mu + 1}}\right\rceil. 
\end{equation}
Note that this choice determines the sequence $\{M_k\}_{k=0}^K$ and consequently $\{\mathcal I_{M_k}\}_{k=0}^K$. Also note that, in practice, this choice may not be possible for all interpolation schemes; see Remark \ref{rem:lev}. 

With the number of samples $M_{K-k}$ fixed, we now examine the complexity of the multilevel approximation:
\begin{align*}
C_{\varepsilon}^\mathrm{(ML)} & = \sum_{k=0}^{K} M_{K-k} C_k \eqsim \sum_{k=0}^{K} M_{K-k} \,  \eta^{k \gamma} \\
		& \lesssim \sum_{k=0}^{K}   \Big(\frac{\varepsilon}{\mathcal{S}(\eta,K)}\Big)^{-\frac{1}{\mu}} \,\eta^{-k\frac{\beta + \gamma}{\mu + 1}} \, \eta^{k\gamma} +  \sum_{k=0}^{K} \eta^{k\gamma}\\
		& \eqsim \varepsilon^{-\frac{1}{\mu}} \mathcal{S}(\eta,K)^{\frac{1}{\mu}} \sum_{k=0}^{K}    \eta^{-k\frac{\beta + \gamma -\gamma(\mu + 1)}{\mu + 1}} +  \sum_{k=0}^{K} \eta^{k\gamma}\\
		& \eqsim \varepsilon^{-\frac{1}{\mu}} \mathcal{S}(\eta,K)^{\frac{1}{\mu}} \sum_{k=0}^{K}    \eta^{-k\frac{\beta -\gamma\mu}{\mu + 1}} +  \sum_{k=0}^{K} \eta^{k\gamma}\\
		& \eqsim \varepsilon^{-\frac{1}{\mu}} \mathcal{S}(\eta,K)^{1 + \frac{1}{\mu}} +  \sum_{k=0}^{K} \eta^{k\gamma}. \\
\end{align*}
To bound the cost in terms of $\varepsilon$, first note that because $K < \frac{1}{\alpha} \log_\eta ( 2 C_s/\varepsilon) + 1$ by \eqref{def:K}, we have 
\begin{equation}\label{eq:sum_cost}
\sum_{k=0}^{K} \eta^{k\gamma} \leq \frac{\eta^{\gamma L}}{1-\eta^{-\gamma}} \leq \frac{\eta^\gamma (2 C_s)^{\gamma/\alpha}}{1-\eta^{-\gamma}} \varepsilon^{-\gamma/\alpha}.
\end{equation}

Next, we need to consider different values of $\beta$ and $\mu$. When $\beta > \gamma\mu $, $\mathcal{S}(\eta,K)$ is a geometric sum that converges to a limit independent of $K$. Because $\alpha \geq \gamma\mu $ implies that $\varepsilon^{-\gamma/\alpha} \leq \varepsilon^{-\frac{1}{\mu}}$ for $\varepsilon < \exp[-1]$, we have $C_{\varepsilon}^\mathrm{(ML)} \lesssim \varepsilon^{-\frac{1}{\mu}}$ in this case. 

When $\beta = \gamma\mu$, we find that $\mathcal{S}(\eta,K) = K+1$, and so, using \eqref{def:K} and $\alpha \geq \mu \gamma$,
$$ C_{\varepsilon}^\mathrm{(ML)} \lesssim \varepsilon^{-\frac{1}{\mu}} (K+1)^{1 + \frac{1}{\mu}} + \varepsilon^{-\frac{\gamma}{\alpha}} \simeq \varepsilon^{-\frac{1}{\mu}} |\log \varepsilon|^{1 + \frac{1}{\mu}}. $$
%by our choice of $K$. 

For the final case of $\beta < \gamma\mu$, we reverse the index in the sum $\mathcal{S}(\eta,K)$ to obtain a geometric sequence
\begin{equation*}
	 \mathcal{S}(\eta,K) = \sum_{k=0}^{K}    \eta^{(k-K)\frac{\beta - \gamma\mu}{\mu + 1}} 
			= \eta^{-K\frac{\beta - \gamma\mu}{\mu + 1}} \sum_{k=0}^{K}    \eta^{-k(\frac{\gamma\mu - \beta}{\mu + 1})} 
			\lesssim \varepsilon^{\frac{\beta-\gamma\mu}{\alpha(\mu+1)}}.
\end{equation*}
Because $\alpha \geq \beta$, this gives 
$$ C_{\varepsilon}^\mathrm{(ML)}\lesssim \varepsilon^{-\frac{1}{\mu}} \varepsilon^{\frac{\beta-\gamma\mu}{\alpha(\mu+1)} (1 + \frac{1}{\mu})}  + \varepsilon^{-\frac{\gamma}{\alpha}} \eqsim \varepsilon^{-\frac{1}{\mu}-\frac{\gamma \mu- \beta}{\alpha \mu}}.$$
This completes the proof.
\end{proof}
}

\begin{remark}\label{rem:lev}\ {\bf Error and quadrature level.} In this section, we characterized the convergence of the interpolation errors in terms of the number of interpolation points $M_k$. Yet when computing quadratures based on sparse grid techniques (see Section \ref{sec:SC}), an arbitrary number of points will not in general have an associated sparse grid. Thus, choosing an interpolant using the optimal number of points according to \eqref{eq:opt_etar} may not be possible in practice. However, in light of estimates such as \cite[Lemma 3.9] {Nobile:2008_1}, it is not unreasonable to make the assumption that given any number of points $M$, there exists an interpolant using $\widetilde{M}$ points, with
\begin{equation} \label{eqn:Mbeta}
	M \leq \widetilde{M} \leq C M^{\delta}
\end{equation}
for some $\delta \geq 1$. We can think of $\delta$ as measuring the inefficiency of our sparse grids in representing higher-dimensional polynomial spaces. Using \eqref{eqn:Mbeta},
one can proceed as in Theorem \ref{theorem:main} to derive a bound on the $\varepsilon$-cost of the resulting multilevel approximation.

Another possibility would be to solve a discrete, constrained minimization problem to find optimal interpolation levels, relying on convergence results for the interpolation error in terms of the interpolation level rather than number of points; see \cite[Theorem 3.4]{Nobile:2008_2}. However, our cost metric relies on precise knowledge of the number of points, making theoretical comparison difficult.
\end{remark}

\begin{remark}\label{rem:canc}\ {\bf Cancellations and computational cost.}
The cost estimate \eqref{eq:cost} takes into consideration the cost of all the terms in the multilevel estimator \eqref{def:ml_app}. However, when the same interpolation operator is used on two consecutive levels, terms in the multilevel approximation cancel and need in fact not be computed. For example, if $\mathcal{I}_{M_{K-k}} = \mathcal{I}_{M_{K-k-1}}$, then 
$$
 \mathcal{I}_{M_{K-k}} (u_{h_k} - u_{h_{k-1}}) + \mathcal{I}_{M_{K-k-1}} (u_{h_{k+1}} - u_{h_k}) = \mathcal{I}_{M_{K-k}} (u_{h_{k+1}} - u_{h_{k-1}})
$$
so that the computation of the interpolants of $u_{h_k}$ is not necessary. Especially in the context of sparse grid interpolation, in practice we choose the same interpolation grid for several consecutive levels, leading to a significant reduction in the actual computational cost compared to that estimated in Theorem \ref{theorem:main}. The effect of these cancellations is clearly visible in some of the numerical experiments of Section \ref{sec:num}.
\end{remark}

%----------------------------------------------------------------------------
\subsubsection{Comparison to single level collocation methods}\label{ssec:cost_comp}
Under the same assumptions as in Theorem \ref{theorem:main}, for any $M_{sl} \in \mathbb{N}_0$ and $h_{sl}$, the error in the standard single-level SC approximation \eqref{def:stdsc} can be bounded by
$$\|u - u_{M_{sl},h_{sl}}^\mathrm{(SL)}\|_{L_{\rho}^2(\Gamma;H^1_0(D))} \leq C_s \, h_{sl}^{\alpha} + C_I \, \zeta(u_h) \, M_{sl}^{-\mu} . $$
To make both contributions equal to $\varepsilon/2$, it suffices to choose $h_{sl} \eqsim \varepsilon^{1/\alpha}$ and $M_{sl} \eqsim \varepsilon^{-1/\mu}$. This choice determines $M_{sl}$ and hence $\mathcal{I}_{M_{sl}}$. The computational cost to achieve a total error of $\varepsilon$ is then bounded by
$$C_{\varepsilon}^\mathrm{(SL)} \eqsim  h^{-\gamma} M_{sl} \eqsim \varepsilon^{- \frac{1}{\mu} -\frac{\gamma}{\alpha}}.$$
A comparison with the bounds on computational complexity proved in Theorem \ref{theorem:main} shows clearly the superiority of the multilevel method. 

In the case $\beta > \gamma \mu$, the cost of obtaining one sample of $u_{h_k}$ grows slowly with respect to $k$, and most of the computational effort of the multilevel approximation is at the coarsest level $k=0$. The savings in cost compared to single level SC hence correspond to the difference in cost between obtaining samples $u_{h_0}$ on the coarse grid $h_0$ and obtaining samples $u_{h_K}$ on the fine grid $h=h_K$ used by the single-level method. This gives a saving of $(h/h_0)^\gamma \eqsim \varepsilon^{\gamma/\alpha}$.

The case $\beta = \mu \gamma$ corresponds to the computational effort being spread evenly across the levels, and, up to a log factor, the savings in cost are again of order $ \varepsilon^{\gamma/\alpha}$.

In contrast, when $\beta < \gamma \mu$, the computational cost of computing one sample of $u_{h_k}$ grows quickly with respect to $k$, and most of the computational effort of the multilevel approximation is on the finest level $k=K$. The benefits compared to single level SC hence corresponds approximately to the difference between $M_K$ and $M_{sl}$. This gives a savings of $M_K/M_{sl} \eqsim (h_K^\beta)^{1/\mu} \eqsim \varepsilon^{\beta/\alpha \mu}$.

%%%%%%%%%%%%%%%%%%%%%%%%%%
\subsection{Multilevel approximation of functionals}\label{ssec:comp_analysis_func}

In applications, it is often of interest to bound the error in the expected value of a functional $\psi$ of the solution $u$, where $\psi:H^1_0(D) \rightarrow \mathbb R$. Similar to \eqref{def:stdsc}, the SC approximation of $\psi(u)$ is given by
\begin{equation}\label{def:stdscccc}
\psi^\mathrm{(SL)}_{k,h}[u] = \mathcal{I}_{M_k}\left[ \psi(u_h) \right ] 
\end{equation}
and, similar to \eqref{def:ml_app}, the multilevel interpolation approximation of $\psi(u)$ is given by
\begin{equation}\label{def:ml_appccc}
	\psi^\mathrm{(ML)}_K[u]   := \sum_{k=0}^{K} \mathcal{I}_{M_{K-k}}\big(\psi(u_{h_k}) - \psi(u_{h_{k-1}})\big),
\end{equation}
where, as before, we set $u_{h_{-1}}:=0$ and we also assume, without loss of generality, that $\psi(0)=0$. 	 
Note that in the particular case of linear functionals $\psi$, we in fact have \[ \psi_{k,h}^\mathrm{(SL)}[u] = \psi (u_{k,h}^\mathrm{(SL)})\quad \text{and}\quad \psi^\mathrm{(ML)}_K[u] = \psi(u_K^\mathrm{(ML)}).\]

Analogous to Theorem \ref{theorem:main}, we have the following result about the $\varepsilon$-cost for the error $\big|\mathbb E\big[\psi(u)-\psi^\mathrm{(ML)}_K[u]\big]\big|$ in the expected value of the multilevel approximation of functionals.

\begin{proposition}
\label{thm:comp_func}
Suppose there exist positive constants $\alpha, \beta, \mu, \gamma, C_s, C_I, C_\zeta, C_c$ and a real-valued function $\zeta$ such that $\alpha \geq \min(\beta,\mu \gamma)$ and, for all $k \in \mathbb N_0$, assume that
\vspace{1ex}
\begin{itemize}
\item[\bf{{\bf F1}.}] $|\mathbb E[\psi(u) - \psi(u_{h_k})]| \leq C_s \, h_k^{\alpha}$ \vspace{1ex}
\item[\bf{{\bf F2}.}] $\big|\mathbb E\big[\psi(u_{h_k})-\psi(u_{h_{k-1}}) - \mathcal I_{M_{K-k}}(\psi(u_{h_k})-\psi(u_{h_{k-1}}))\big]\big| \leq C_I \, M_{K-k}^{-\mu} \, \zeta(\psi(u_{h_k})-\psi(u_{h_{k-1}}))$ 
\item[\bf{{\bf F3}.}] $\zeta(\psi(u_{h_k})-\psi(u_{h_{k-1}})) \leq C_\zeta \, h_k^\beta$
\item[\bf{{\bf F4}.}] $C_k = C_c \, h_k^{-\gamma}$.
\end{itemize}
\vspace{1ex}
Then, for any $\varepsilon < \exp[-1]$,  there exists a value $K$ such that
$$\big|\mathbb E\big[\psi(u)-\psi^\mathrm{(ML)}_K(u)\big]\big| \leq \varepsilon, $$
with computational cost $C_\varepsilon^\mathrm{(ML)}$ bounded as in Theorem \ref{theorem:main}.
\end{proposition}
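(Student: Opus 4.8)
The plan is to recognize that the hypotheses \textbf{F1}--\textbf{F4} are exactly the scalar counterparts of \textbf{A4}--\textbf{A6} (with the choice $\sigma_k = M_k^{-\mu}$ already hard-wired into \textbf{F2}), in which the real-valued error functional $v \mapsto \mathbb E[\psi(v)]$ takes over the role played by the Bochner norm $\|\cdot\|_{L_{\rho}^2(\Gamma;H^1_0(D))}$ in Theorem \ref{theorem:main}. Once the error of the functional estimator is decomposed in the same spirit as \eqref{eq:err_split}, the choice of $K$, the Lagrange-multiplier optimization of the sample numbers, and the three-case cost computation all carry over unchanged, so that essentially only the error decomposition requires verification.

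First I would use linearity of expectation to split the error as
\begin{equation*}
\mathbb E\big[\psi(u) - \psi^\mathrm{(ML)}_K[u]\big] = \underbrace{\mathbb E\big[\psi(u) - \psi(u_{h_K})\big]}_{(I')} + \underbrace{\mathbb E\big[\psi(u_{h_K}) - \psi^\mathrm{(ML)}_K[u]\big]}_{(II')}.
\end{equation*}
The spatial contribution is controlled immediately by \textbf{F1}, giving $|(I')| \leq C_s h_K^\alpha$, which matches the bound on $(I)$. For the interpolation contribution I would invoke the telescoping identity $\psi(u_{h_K}) = \sum_{k=0}^K\big(\psi(u_{h_k}) - \psi(u_{h_{k-1}})\big)$, valid because of the conventions $u_{h_{-1}}:=0$ and $\psi(0)=0$, together with the definition \eqref{def:ml_appccc}, to write
\begin{equation*}
(II') = \sum_{k=0}^K \mathbb E\big[\psi(u_{h_k}) - \psi(u_{h_{k-1}}) - \mathcal I_{M_{K-k}}\big(\psi(u_{h_k}) - \psi(u_{h_{k-1}})\big)\big].
\end{equation*}
Pulling the finite sum out of the expectation, applying the triangle inequality for scalars, and then using \textbf{F2} followed by \textbf{F3}, I obtain
\begin{equation*}
|(II')| \leq \sum_{k=0}^K C_I \, C_\zeta \, M_{K-k}^{-\mu} \, h_k^\beta = \sum_{k=0}^K C_I \, C_\zeta \, \sigma_{K-k} \, h_k^\beta,
\end{equation*}
which is \emph{verbatim} the bound derived for $(II)$ in the proof of Theorem \ref{theorem:main}.

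From this point onward the argument is identical to that of Theorem \ref{theorem:main}: I would fix $K$ through \textbf{F1} as in \eqref{def:K} so that $|(I')| \leq \varepsilon/2$, minimize the cost $C^\mathrm{(ML)} = \sum_{k=0}^K M_{K-k}\,C_k$ subject to $|(II')| \leq \varepsilon/2$ using the same Lagrange-multiplier computation that yields \eqref{eq:opt_etar} (with $C_k = C_c h_k^{-\gamma}$ from \textbf{F4} in place of \textbf{A6}), and conclude with the same case split into $\beta > \mu\gamma$, $\beta = \mu\gamma$, and $\beta < \mu\gamma$. The only genuinely new step — and hence the main point to check carefully — is the passage of the absolute value through the sum after using linearity of expectation, i.e.\ the replacement of the norm triangle inequality of Theorem \ref{theorem:main} by the elementary scalar inequality $|\sum_k a_k| \leq \sum_k |a_k|$. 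This is routine rather than an obstacle, so I expect no real difficulty; the substance of the result lies entirely in the cost optimization already carried out, which the reduction above inherits without change.
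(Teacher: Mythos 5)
Your proposal is correct and follows exactly the route the paper intends: the paper itself offers no separate proof of Proposition \ref{thm:comp_func}, remarking only that {\bf F1}--{\bf F4} mirror {\bf A4}--{\bf A6}, and your decomposition via linearity of expectation, the telescoping sum, and the scalar triangle inequality reproduces precisely the error bound of Theorem \ref{theorem:main}, after which the choice of $K$ and the cost optimization carry over unchanged.
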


The assumptions {\bf {\bf F1}}--{\bf {\bf F4}} are essentially the same as the assumptions {\bf A4}--{\bf A6} of Theorem \ref{theorem:main}, with perhaps different values for the constants $C_s$, $C_I$, $C_\zeta$, and $C_c$. Certainly, bounded linear functionals have this inheritance property. In Section \ref{sec:SC}, we give some examples of nonlinear functionals that also have this property.

%--------------------
\section{Global sparse grid interpolation} \label{sec:SC}

In this section, we provide a specific example of a single level SC approach, given by 
\eqref{def:stdsc}, that will be used to construct the interpolation operators in our MLSC 
approach.  As such, we briefly recall generalized multi-dimensional (possibly sparse grid) interpolation, as well as theoretical results related to the interpolation operator.  For a more thorough description, see 
\cite{Babuska:2007,Nobile:2008_1,Nobile:2008_2, Beck:2011p5113}.

\begin{remark}\
In this section, we again introduce a second notion of {\em levels}. The levels here should not be confused with the levels used previously. For the latter, `levels' refer to members of hierarchies of spatial and stochastic approximations, both of which were indexed by $k$. In this section, `levels' refer to a sequence, indexed by $l$, of stochastic polynomial spaces and corresponding point sets used to construct a specific sparse grid interpolant. The result of this construction, i.e., of using the levels indexed by $l$, is the interpolants used in the previous sections that were indexed by $k$.
\end{remark} 

%%%%%%%%%%%%%%%%%
\subsection{Construction and convergence analysis of multi-dimensional interpolants} \label{ssec:sgconst}

The construction of the interpolant in the $N$-dimensional space $\Gamma = \prod_{n=1}^{N} \Gamma_n$ is based on sequences of one-dimensional Lagrange interpolation operators $\{\mathcal U_n^{\ppp(l)}\}_{l \in \mathbb N} : C^0(\Gamma_n) \rightarrow \mathcal{P}_{\ppp(l)-1}(\Gamma_n)$, where $\mathcal{P}_\ppp(\Gamma_n)$ denotes the space of polynomials of degree $\ppp$ on $\Gamma_n$. In particular, for each $n=1,\ldots, N$, let $l\in\mathbb{N}_+$ denote the one-dimensional level of approximation and let $\{y_{n,j}^{(l)}\}_{j=1}^{\ppp({l}) }\subset\Gamma_n$ denote a sequence of one-dimensional interpolation points in $\Gamma_n$.  Here, 
$\ppp({l}):\mathbb{N}_+\rightarrow\mathbb{N}_+$ is such that $\ppp(1) = 1$ and $\ppp({l}) < \ppp({l+1})$ for $l= 2,3,\ldots$, so that $\ppp(l)$ strictly increases with $l$ and defines the total number of collocation points at level $l$. For a univariate function $v\in C^0(\Gamma_n)$,
we define $\mathcal{U}_n^{\ppp({l}) }$ by 
\begin{equation}\label{tttttt}
\mathcal{U}_n^{\ppp({l})}[v](y_n) = \sum_{j=1}^{\ppp({l}) } v\big(y_{n,j}^{(l)}\big)\varphi_{n,j}^{(l)}(y_n)
\quad\text{for } l_n=1,2,\ldots,
\end{equation}
where $\varphi_{n,j}^{(l)}\in\mathcal{P}_{\ppp({l})  - 1}(\Gamma_n)$, $j=1,\ldots,\ppp({l}) $, are Lagrange fundamental polynomials of degree $\ppp({l})  -1$, which are completely determined by the property $\varphi_{n,j}^{(l)}( y_{n,i}^{(l)} ) = \delta_{i,j}$.

Using the convention that $\mathcal{U}_n^{p(0)} = 0$, we introduce the { difference operator} given by
\begin{equation}
\label{eq:Delta}
\Delta_n^{\ppp({l})} = \mathcal{U}^{\ppp({l})}_n - \mathcal{U}^{\ppp(l-1)}_n.
\end{equation}

For the multivariate case, we let $\bm{l}=(l_1, \ldots, l_N)\in\mathbb{N}_+^N$ denote a 
multi-index and $L\in\mathbb{N}_+$ denote the  total level of the sparse grid approximation.  
Now, from \eqref{eq:Delta}, the 
{\em $L$-th level generalized sparse-grid approximation} of $v \in C^0(\Gamma)$ is given by
\begin{equation}
\label{eq:SG}
\mathcal{A}^{p, g}_{L} [ v ] =
\sum_{g(\bm{l})\leq {L}}\bigotimes_{n=1}^N \Delta_n^{\ppp({l_n})} [ v ],
\end{equation}
where $g:\mathbb{N}^N_+\rightarrow\mathbb{N}$ is another strictly increasing function 
that defines the mapping between the multi-index $\bm{l}$ and the 
level $L$ used to construct the sparse grid. 
The single level approximation \eqref{eq:SG} requires the 
independent evaluation of $v$
on a deterministic set of {\em distinct collocation points} given by
\begin{equation*}
\mathcal{H}^{p,g}_L = \bigcup_{g(\bm{l})\leq {L}} \bigotimes_{n=1}^N
\left\{y_{n,j}^{(l_n)}\right\}_{j=1}^{\ppp({l_n})}
\end{equation*}
having cardinality $M_L$.

\begin{remark}\
For the MLSC method, the interpolation operators ${\mathcal I}_{M_k}$ introduced in Section \ref{ssec:sto_int} are chosen as $\mathcal{A}^{p, g}_{L}$ with $M_k=M_L$. Indeed, this is done for the numerical examples of Section \ref{sec:num}.
\end{remark}

The particular choices of the one-dimensional growth rate $\ppp(l)$ and the function $g(\bm l)$ define
a general multi-index set $\mathcal{J}^{p,g}(L)$ used in the construction of the sparse grid, and the corresponding underlying polynomial space of the approximation denoted $\mathcal{P}_{\mathcal{J}^{p,g}(L)}(\Gamma)$ \cite{Beck:2011p5113,GWZ14}.
Some examples of functions $\ppp(l)$ and $g(\bm l)$ and the corresponding polynomial approximation spaces  are given in Table \ref{sgex}. In the last example in the table $\bm{\alpha}=(\alpha_1,\ldots, \alpha_N) \in \mathbb{R}_{+}^N$ is a vector of weights reflecting the anisotropy of the system, i.e., the relative importance of each dimension \cite{Nobile:2008_2}; we then define 
$\displaystyle\alpha_{{min}} := \min_{n=1,\ldots,N} \alpha_n$. 
The corresponding anisotropic versions of the other approximations and corresponding polynomial subspaces can be analogously constructed.

\begin{table}[h!]
\caption{The functions 
$p:\mathbb{N}_+\rightarrow\mathbb{N}_+$
and 
$g:\mathbb{N}^N_+\rightarrow\mathbb{N}$
and the corresponding polynomial subspaces.
}
\label{sgex}
\begin{center}
\renewcommand*\arraystretch{1.5}
\begin{tabular}{l c c c}
\bf Polynomial Space & & $\ppp(l)$ & $g(\bm l)$ \\
\hline \vspace{.5ex}
	\bf Tensor product&& $\ppp(l)=l$  & 
	$\displaystyle\max_{1\leq n\leq N} (l_n -1)$ \\ 
	\bf Total degree && $\ppp(l)=l$  & 
	$\sum_{n=1}^N (l_n - 1)$ \\ 
	\bf Hyperbolic cross && $\ppp(l)=l$ & $\prod_{n=1}^N ( l_n  - 1 )$ \\
	\bf Sparse Smolyak && $\ppp(l)=2^{l-1} + 1,\, l>1$ & $ \sum_{n=1}^N (l_n - 1)$ \\ \vspace{.5ex}
	\bf Anisotropic Sparse Smolyak && $\ppp(l)=2^{l-1} + 1,\,l>1$ & $\sum_{n=1}^N \frac{\alpha_n}{\alpha_{{min}}}(l_n - 1)$, $\bm{\alpha} \in \mathbb{R}_{+}^N$  \\
	\hline
\end{tabular}
\end{center}
\end{table}
 
Table \ref{sgex} defines several polynomial spaces. A means for constructing a basis for polynomial subspaces consists of the selecting a set of points and the defining basis functions based on those points, e.g., Lagrange fundamental polynomials. For Smolyak polynomial spaces, the most popular choice of points are the sparse grids based on the one-dimensional Clenshaw-Curtis abscissas \cite{clenshaw_curtis_60} which are the extrema of Chebyshev polynomials, including the end-point extrema. The resulting multi-dimensional points are given by, for level $l$ and in the particular case $\Gamma_n = [-1,1]$ and $\ppp(l)> 1$,
$$
 y^{(l)}_{n,j} = -\cos\left({\pi (j-1)\over \ppp(l)  -1}\right) \quad \mbox{for $j=1,\dots, \ppp(l)$}.
$$
In particular, the choice $\ppp(l)$ given in Table \ref{sgex} for the Smolyak case results in a {\em nested} family of one-dimensional abscissas, i.e., 
$\big\{y^{(l)}_{n,j}\big\}_{j=1}^{\ppp(l) }\subset \big\{y_{n,j}^{(l+1)}\big\}_{j=1}^{\ppp(l+1) }$,
so that the sparse grids are also nested, i.e.,
${\mathcal{H}}^{p,g}_L\subset {\mathcal{H}}^{p,g}_{L+1}$. Using $g(\bm l)$ in \eqref{eq:SG}, 
given as in Table \ref{sgex} for the Smolyak polynomial space, corresponds
to the most widely used sparse-grid approximation, as first described in \cite{Smolyak_63}.

Other nested families of sparse grids can be constructed from, e.g., the Newton-Cotes and Gauss-Patterson one-dimensional abscissas.

\begin{remark}\
In general, the growth rate $\ppp(l)$ can be chosen as any increasing function on $\mathbb N$.  However,  
to construct the approximation \eqref{eq:SG} in the tensor product, total degree, hyperbolic cross, and Smolyak polynomial spaces, the required functions $p$ and $g$ are described in Table \ref{sgex}.  Moreover, if the underlying abscissas can be nested, as for the Clenshaw-Curtis points described above, the approximation 
\eqref{eq:SG} remains a Lagrange interpolant. 
For non-nested point families, such as standard Gaussian abscissas, 
the approximation \eqref{eq:SG} is no longer guaranteed to be an interpolant. 
As such, additional errors may be introduced that must be analyzed  
\cite{Nobile:2008_1}.  
\end{remark}

%%%%%%%%%%%
\subsubsection{General convergence analysis for multidimensional interpolants}

The convergence with respect to the total number of collocation points for the  
tensor product, sparse isotropic, and anisotropic Smolyak approximations
was analyzed in 
\cite{Babuska:2007,Nobile:2008_1,Nobile:2008_2}.  In what follows, 
our goal is 
to prove the bounds on the interpolation error in the approximate solutions $u_{h_k}$ and the functionals $\psi(u_{h_k})$, for ${k \in \mathbb N_0}$. Let
$W$ denote a general Banach space and, for $n \in \{1, \dots, N\}$, define $\Gamma_n^* := \prod_{i \neq n} \Gamma_i$ and let $\bm y_n^*$ denote an arbitrary element of $\Gamma_n^*$. We will use the following theorem, proved in \cite{Nobile:2008_1,Nobile:2008_2}.  

\begin{theorem}
\label{theorem:SCerror}
Let $v\in C^0(\Gamma;W)$ be such that for each random direction $n \in \{1, \dots, N\}$, $v$ admits an analytic extension in the region of the complex plane $\Sigma(n;\tau_n) := \{z\in \mathbb{C}: dist(z,\Gamma_n) \leq \tau_n\}$. Then, there exist constants $C(N)$ and $\mu(N)$, depending on $N$, such that
	\begin{equation*}
	 	\|v - \mathcal{A}^{p,g}_{L} v \| _{L^2_{\rho}(\Gamma;W)} \leq \, C(N) \, 
		M_L^{-\mu(N)}\, \zeta(v)  ,
	\end{equation*}
where $M_L$ is the number of points used by $\mathcal{A}^{p,g}_{L}$ and
	\begin{equation}\label{eq:zeta}
		\zeta(v) \equiv  \max_{1\leq n \leq N} \max_{{\bm y}_n^* \in \Gamma_n^*} \max_{z \in \Sigma(n;\tau_n)} \|v(z,{\bm y}_n^*)\|_{W}.
	\end{equation}
\end{theorem}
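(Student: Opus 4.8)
The plan is to reduce the multivariate statement to a one-dimensional analysis of Lagrange interpolation of analytic functions, and then to sum the contributions of the tensorized difference operators over the admissible index set. First I would establish the classical one-dimensional estimate: for a function analytic in the region $\Sigma(n;\tau_n)$, the Lagrange interpolation error at $p(l)$ points decays geometrically in $p(l)$. Writing the error through the best polynomial approximant and invoking Bernstein's theorem on polynomial approximation of analytic functions, together with a bound on the Lebesgue constant of the chosen abscissas, one obtains
$$\|v - \mathcal{U}_n^{\ppp(l)} v\|_{L^2_\rho(\Gamma_n;W)} \leq C\, e^{-r_n\, \ppp(l)}\, \max_{z \in \Sigma(n;\tau_n)} \|v(z,\cdot)\|_W$$
for some rate $r_n > 0$ tied to $\tau_n$. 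The crucial feature is that the bound is proportional to the size of $v$ over the analyticity region, which is precisely the quantity appearing in $\zeta(v)$ of \eqref{eq:zeta}. By the triangle inequality, the one-dimensional difference operator $\Delta_n^{\ppp(l)}$ inherits the same geometric decay.

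Next I would exploit the tensor-product structure of \eqref{eq:SG}. Because $\mathcal{A}^{p,g}_L$ is the truncation of the full tensor sum $\sum_{\bm{l}} \bigotimes_n \Delta_n^{\ppp(l_n)}$ to indices with $g(\bm{l}) \leq L$, the error $v - \mathcal{A}^{p,g}_L v$ equals the sum of the neglected terms $\bigotimes_n \Delta_n^{\ppp(l_n)}[v]$ over $\{g(\bm{l}) > L\}$. Each such term factorizes, so using the one-dimensional bounds from the first step, together with uniform bounds on the one-dimensional Lebesgue constants to control the directions in which no difference is applied, each term is controlled by a product $\prod_n C\, e^{-r_n \ppp(l_n)}$ times $\zeta(v)$. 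This yields a bound on the total error as a multi-indexed geometric sum over $\{g(\bm{l}) > L\}$, which converges and decays geometrically in the level $L$.

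Finally I would convert the geometric-in-$L$ estimate into the algebraic-in-$M_L$ rate asserted in the theorem. This requires relating the level $L$ to the cardinality $M_L$ of the sparse grid $\mathcal{H}^{p,g}_L$ via combinatorial counting of the admissible multi-indices and the growth rate $\ppp(l)$, and then eliminating $L$ in favor of $M_L$. The dimension-dependent constant $C(N)$ and rate $\mu(N)$ emerge precisely from this step, as they encode how the number of admissible indices and grid points scales with $N$ and $L$. I expect this final conversion to be the main obstacle: the geometric decay in $L$ is straightforward, but extracting a clean algebraic rate in $M_L$ with explicit dimension dependence demands careful estimates on the sparse-grid cardinality and on the sub-exponential growth of $M_L$ in $L$. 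This is where the bulk of the technical work in \cite{Nobile:2008_1,Nobile:2008_2} lies, and where the factor $\zeta(v)$ must be tracked carefully so that it remains the only $v$-dependent quantity in the final bound.
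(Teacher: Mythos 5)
Your proposal is correct and follows essentially the same route as the proof this paper relies on: the paper does not prove Theorem \ref{theorem:SCerror} itself but imports it from \cite{Nobile:2008_1,Nobile:2008_2}, where the argument is precisely the one you outline --- one-dimensional geometric error bounds for interpolation of analytic functions via best polynomial approximation (cf.\ the bound \eqref{eq:best} quoted in the paper) and Lebesgue-constant control, summation of the neglected tensorized difference terms over $\{g(\bm{l})>L\}$, and a combinatorial counting argument relating $L$ to $M_L$ that produces the dimension-dependent algebraic rate $\mu(N)$. You have also correctly identified the last counting step as the technically heavy part and the place where $C(N)$ and $\mu(N)$ arise.
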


\begin{remark}\ {{\bf Anisotropic sparse grid approximations}.}
For anisotropic Smolyak approximations, we use a weight vector $\bm{\alpha} = (\alpha_1,\ldots, \alpha_N)$, where $\alpha_n$ is related to 
$\tau_n$ in Lemma \ref{lem:u_analytic} and corresponds to the rate of convergence 
of the { best approximation} of an analytic function by a polynomial;
see \cite[section 2.2]{Nobile:2008_2}.
In particular, 
for $v\in  C^0(\Gamma_n;W)$ that admits an analytic
  extension in the region of the complex plane $\Sigma(n;\tau_n) =
  \{z\in\mathbb{C}, \;\; \mbox{dist}(z,\Gamma_n)\leq\tau_n\}$ for some
  $\tau_n>0$, we have
\begin{equation}
\label{eq:best}
 \min_{w\in \mathcal{P}_{\ppp(l_n)}} \|v-w\|_{C^0(\Gamma_n;W)}
  \leq \frac{2}{e^{2\alpha_n}-1} e^{- 2 \ppp({l_n}) \alpha_n} \, \max_{z\in\Sigma(\Gamma_n;\tau_n)}\|v(z)\|_{_{W}} 
 \end{equation}
with
\begin{equation*}
\displaystyle{0<\alpha_n=\frac{1}{2}\log\bigg(\frac{2\tau_n}{|\Gamma_n|}+\sqrt{1+\frac{4\tau_n^2}{|\Gamma_n|^2}}\bigg)}.
\end{equation*}
For an {\em isotropic grid}, all the components of the weight vector $\bm{\alpha}$ are the same so that one has to take the worst case scenario, i.e., choose the components of $\bm{\alpha}$ to all equal $\alpha_{{min}}$, because one has no choice but to assume the worst convergence rate in \eqref{eq:best}. 
\end{remark}
\begin{remark}\ {{\bf Dimension-dependent convergence rate}.} 
Table \ref{tbl:SCconv} provides specific values of the convergence rate $\mu(N)$ in Theorem \ref{theorem:SCerror} for some choices of grids, including some sparse grids. In particular, the abscissas are here chosen as the nested Clenshaw-Curtis points, with the mapping $\ppp(l)$ 
given in Table \ref{sgex} describing the corresponding polynomial approximation space. 
As seen in Table \ref{tbl:SCconv}, the asymptotic rate of convergence $\mu$ in general deteriorates with growing dimension $N$ of the stochastic space. The use of sparse grid SC methods is hence only of interest for dimensions $N$ for which $\mu \geq 1/2$ so that the error still converges faster than the corresponding Monte Carlo sampling error. The multilevel approximation presented in this paper suffers from the same deterioration of convergence rate, and roughly speaking, the MLSC method can improve on the multilevel Monte Carlo method only when standard SC performs better that standard Monte Carlo; see \cite[Theorem 4.1]{Cliffe:2011}. 
\end{remark}

\begin{table} [h!]	
	\caption{Convergence rates for $N$-dimensional interpolation operators; see Theorem \ref{theorem:SCerror}.}
	\begin{center}
	\renewcommand{\arraystretch}{1.4}
	\begin{tabular}{ l c}
		\multicolumn{1}{c}{Grid type} & $\mu(N)$ \\ \hline \vspace{0.5ex}
		Full tensor product & $\frac{\alpha_{{min}}}{N}$ \\ \vspace{0.5ex} 
		Classical Smolyak & $\frac{\alpha_{{min}}}{1+\log(2N)}$\\ \vspace{0.5ex}
		Anisotropic classical Smolyak & $\frac{\alpha_{{min}} (\log(2)e - 1/2)}{\log(2) + \sum_{n=1}^N \frac{\alpha_{{min}}}{\sigma_n}}$\\[1ex]
		\hline
	\end{tabular}
	\end{center}
	\label{tbl:SCconv}
\end{table}

\subsection{Multilevel approximation using sparse grids}
\label{sec:sgfun}

The goal of this section is to prove the analyticity assumptions of Theorem \ref{theorem:SCerror} for the approximate solutions $u_{h_k}$ and the functionals $\psi(u_{h_k})$ for 
${k \in \mathbb N_0}$ and hence conclude that the corresponding sparse grid interpolants satisfy assumptions {\bf A5} and {\bf F2}, respectively. 
We start with the required result for the approximate solutions $u_{h_k}$; this result was proved in \cite[Lemmas 3.1 and 3.2]{Babuska:2007}. 

\begin{lemma}\label{lem:u_analytic} If $f \in C^0(\Gamma;L^2(D))$ and if $a \in C^0_{loc}(\Gamma; L^\infty(D))$ and uniformly bounded away from zero, then the solution of the problem \eqref{eq:mod} satisfies $u \in C^0(\Gamma; H^1_0(D))$.
Furthermore, under the assumption that, for every $\bm y=(y_n,\bm y_n^*) \in \Gamma$, there exists $\gamma_n < \infty$ such that for all $j \in \mathbb N_0$
\begin{equation*}
\left\|\frac{\partial^j_{y_n} a(\bm y)}{a(\bm y)}\right\|_{L^\infty(D)} \leq \gamma_n^j\, j! \qquad \text{and} \qquad \frac{\|\partial^j_{y_n} f\|_{L^2(D)}}{1 + \|f\|_{L^2(D)}} \leq \gamma_n^j\, j!,
\end{equation*}
the solution $u(y_n,\bm y_n^*,\mathbf{x})$ as a function of $y_n$, $u: \Gamma_n \rightarrow C^0(\Gamma_n^*; H^1_0(D))$, admits an analytic extension $u(z,\bm y_n^*,\mathbf{x})$, $z \in \mathbb C$, in the region of the complex plane $\Sigma(n;\tau_n)$ with $0 < \tau_n < (2\gamma_n)^{-1}$. Moreover, for all $z \in \Sigma(n;\tau_n)$,
\begin{equation*}
\|u(z)\|_{C^0(\Gamma_n^*; H^1_0(D))} \leq \lambda
\end{equation*}
for some constant $\lambda >0$ independent of $n$.
\end{lemma}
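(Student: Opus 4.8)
The plan is to establish the three claims in turn: continuity of $u$ in the parameter, existence of the analytic extension in the complex strip $\Sigma(n;\tau_n)$, and the uniform bound $\|u(z)\|_{C^0(\Gamma_n^*;H^1_0(D))} \leq \lambda$. The overarching strategy is to work with the weak form of \eqref{eq:mod}: for fixed $\bm y$, the solution $u(\bm y,\cdot)\in H^1_0(D)$ satisfies $\int_D a(\bm y,\mathbf x)\nabla u(\bm y,\mathbf x)\cdot\nabla v\,\mathrm d\mathbf x = \langle f(\bm y),v\rangle$ for all $v\in H^1_0(D)$. Continuity follows from the coercivity and boundedness hypotheses on $a$ together with the continuity of $a$ and $f$ in $\bm y$: taking two parameter values, subtracting the weak forms, and testing against $u(\bm y)-u(\bm y')$ gives a Lipschitz-type estimate via the Lax--Milgram / coercivity constant, so $u\in C^0(\Gamma;H^1_0(D))$.

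For the analytic extension, I would fix a direction $n$ and an anchor $\bm y_n^*\in\Gamma_n^*$, and study $u$ as a function of the single variable $y_n$ holding the others fixed. The natural route is to bound the derivatives $\partial_{y_n}^j u$ and show the resulting Taylor series in $y_n$ has a complex radius of convergence at least $\tau_n$. Differentiating the weak form $j$ times in $y_n$ and using the Leibniz rule produces a recursion: $\partial_{y_n}^j u$ solves an elliptic problem with the same operator $a(\bm y,\cdot)$ but with a right-hand side built from $\partial_{y_n}^i a$ acting on the lower derivatives $\partial_{y_n}^{j-i}u$ and from $\partial_{y_n}^j f$. Using coercivity to invert the operator, the hypotheses $\|\partial_{y_n}^j a/a\|_{L^\infty}\leq \gamma_n^j j!$ and $\|\partial_{y_n}^j f\|_{L^2}\leq \gamma_n^j j!(1+\|f\|_{L^2})$ feed into this recursion; an induction on $j$ then yields a bound of the form $\|\partial_{y_n}^j u\|_{H^1_0(D)} \leq C\,(2\gamma_n)^j\, j!$. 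The factor $2\gamma_n$ (rather than $\gamma_n$) is what forces the radius of convergence to be $(2\gamma_n)^{-1}$, giving analyticity precisely in $\Sigma(n;\tau_n)$ for $0<\tau_n<(2\gamma_n)^{-1}$; the complex extension is then defined by the convergent power series, and one checks it still solves the (complexified) weak problem.

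The uniform bound $\|u(z)\|\leq\lambda$ on the strip follows by summing the Taylor series: $\|u(z)\|_{H^1_0(D)}\leq \sum_j \|\partial_{y_n}^j u\|_{H^1_0(D)}\,|z-y_n|^j/j! \leq C\sum_j (2\gamma_n|z-y_n|)^j$, which is a convergent geometric series for $|z-y_n|\leq\tau_n<(2\gamma_n)^{-1}$, bounded independently of $n$ and of $\bm y_n^*$ once the constant $C$ is controlled uniformly (this uses the uniform coercivity bound $a_{min}$ and a uniform a priori bound on $\|u\|$ from $\|f\|$). The main obstacle is the inductive derivative estimate in the second step: one must carefully bound the combinatorial sum arising from the Leibniz rule $\sum_{i=0}^{j}\binom{j}{i}\|\partial_{y_n}^i a/a\|_{L^\infty}\,\|\partial_{y_n}^{j-i}u\|_{H^1_0(D)}$ and show that the factorial growth closes the induction with the constant $2\gamma_n$, rather than degrading to a larger rate. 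Since this is precisely the content of \cite[Lemmas 3.1 and 3.2]{Babuska:2007}, I would either reproduce that induction directly or cite it; the remaining continuity and series-summation steps are routine given the coercivity and boundedness assumptions.
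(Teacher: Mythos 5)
Your outline is correct and matches the paper's treatment: the paper does not prove this lemma itself but simply cites \cite[Lemmas 3.1 and 3.2]{Babuska:2007}, and your sketch (continuity via Lax--Milgram, the Leibniz-rule induction giving $\|\partial_{y_n}^j u\|_{H^1_0(D)}\lesssim (2\gamma_n)^j j!$, analytic continuation by the resulting power series, and the geometric-series bound on $\Sigma(n;\tau_n)$) is precisely the argument of that reference. Deferring the combinatorial closure of the induction to the citation, as you propose, is exactly what the paper does.
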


Define the Banach space $\Lambda(\Gamma; H^1_0(D))$ consisting of all functions $ v \in C^0(\Gamma;H^1_0(D))$ such that, for all $ n \in \{1, \dots, N\}$, $v$ admits an analytic extension in the region $\Sigma(n;\tau_n)$. It follows from Lemma \ref{lem:u_analytic} that, under appropriate assumptions on $a$ and $f$, we have $u \in \Lambda(\Gamma; H^1_0(D))$. Because the dependence on $\bm y$ is unchanged in the approximate solution $u_{h_k}$, it also follows that $u_{h_k} \in \Lambda(\Gamma; H^1_0(D))$ for all $k \in \mathbb N_0$, and hence also $u_{h_k} - u_{h_{k-1}} \in \Lambda(\Gamma; H^1_0(D))$ for all $k \in \mathbb N$.

Similar to  {\bf A4}, it follows from standard finite element theory \cite{brenner_scott,ciarlet} that with $W=H^1_0(D)$ and $\zeta$ as in \eqref{eq:zeta}, $\zeta(u_{h_k})$ can be bounded by a constant independent of $k$, whereas $\zeta(u_{h_k} - u_{h_{k-1}})$ can be bounded by a constant multiple of $h_k^\alpha$ for some $\alpha>0$. In general, the constants appearing in these estimates will depend on norms of $a$ and $f$ as well as on the mesh refinement parameter $\eta$. We can hence conclude that with $\mathcal{I}_{M_k} = \mathcal{A}^{p,g}_{L_k}$, assumption {\bf A5} is satisfied for the interpolation schemes considered in Theorem \ref{theorem:SCerror}.  Therefore, for the numerical examples presented in 
Section \ref{sec:num}, we utilize the sparse grid stochastic collocation 
as the interpolatory scheme.

Now we verify the analyticity assumption in Theorem \ref{theorem:SCerror} also for the functionals $\psi(u)$. Because Lemma \ref{lem:u_analytic} already gives an analyticity result for $u$, we use the following result, which can be found in \cite{w65}, about the composition of two functions on general normed vector spaces.

\begin{theorem}\ Let $X_1$, $X_2$, and $X_3$ denote normed vector spaces and let $\theta: X_1 \rightarrow X_2$ and $\nu: X_2 \rightarrow X_3$ be given. Suppose that $\theta$ admits an analytic extension to the set $\tilde X_1$ and $\nu$ admits an analytic extension to the set $\tilde X_2$, with $\theta(\tilde X_1) \subseteq \tilde X_2$.
Then, the composition $\nu \circ \theta : X_1 \rightarrow X_3$ admits an analytic extension to the set $\tilde X_1$.
\end{theorem}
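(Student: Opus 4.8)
The plan is to treat analyticity as a \emph{local} property and thereby reduce the statement to the composition of two locally convergent power series. I will use the standard characterization that a map between (complexified) normed vector spaces is analytic on an open set precisely when, in a neighborhood of each point, it is represented by a uniformly convergent series of continuous homogeneous polynomials; this is equivalent to complex Fr\'echet differentiability there. Fix $x_0 \in \tilde X_1$ and set $y_0 = \theta(x_0)$, which lies in $\tilde X_2$ by the inclusion hypothesis. Since it suffices to establish analyticity of $\nu \circ \theta$ at an arbitrary point of $\tilde X_1$, the goal reduces to exhibiting a convergent local expansion of $\nu \circ \theta$ about $x_0$.

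First I would record the two expansions supplied by the hypotheses. Analyticity of $\theta$ at $x_0$ gives, for $\|h\| < s$,
$$
\theta(x_0 + h) = y_0 + \sum_{m \geq 1} Q_m(h),
$$
where each $Q_m$ is a continuous $m$-homogeneous polynomial and the series converges uniformly, while analyticity of $\nu$ at $y_0$ gives, for $\|k\| < r$,
$$
\nu(y_0 + k) = \sum_{n \geq 0} P_n(k),
$$
with $P_n$ a continuous $n$-homogeneous polynomial. Substituting $k = \sum_{m \geq 1} Q_m(h)$ and expanding formally yields a series in $h$; grouping the terms of equal homogeneity degree produces candidate continuous homogeneous polynomials $R_j$ with $\nu(\theta(x_0 + h)) = \sum_{j \geq 0} R_j(h)$.

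The main obstacle is to justify this rearrangement, i.e., to show the composed series converges absolutely and uniformly on a smaller ball $\|h\| < \delta$. I would handle this by the method of majorants. The uniform convergence of the two expansions yields Cauchy-type bounds $\|P_n\| \leq M r^{-n}$ and $\|Q_m\| \leq M' s^{-m}$, whence $\|\theta(x_0+h) - y_0\| \leq \varphi(\|h\|)$ with $\varphi(t) = \frac{M' t}{s - t}$, and the full double series is dominated termwise by the scalar composition $\frac{M}{1 - r^{-1}\varphi(t)}$. Choosing $\delta$ so small that $\varphi(\delta) < r$ makes this scalar majorant analytic at $t = \delta$; comparison then forces absolute and uniform convergence of $\sum_j R_j(h)$ on $\|h\| < \delta$, so $\nu \circ \theta$ is represented by a convergent power series near $x_0$ and is therefore analytic there.

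Two points remain to be dispatched, both quickly. Continuity of $\theta$, a consequence of its analyticity, guarantees that a sufficiently small ball about $x_0$ is mapped into the ball about $y_0$ on which the $\nu$-expansion is valid, so the substitution is legitimate; and the inclusion $\theta(\tilde X_1) \subseteq \tilde X_2$ ensures $\nu \circ \theta$ is defined throughout $\tilde X_1$. I note that a shorter route bypasses the majorant estimate altogether: by the chain rule for complex Fr\'echet derivatives, $D(\nu \circ \theta)(x) = D\nu(\theta(x)) \circ D\theta(x)$ is a composition of bounded linear maps depending analytically on $x$, so $\nu \circ \theta$ is complex Fr\'echet differentiable on $\tilde X_1$, which is the analyticity criterion. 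I would present the power-series argument as primary, since it matches the framework of the cited reference, and mention the Fr\'echet-derivative argument as a remark.
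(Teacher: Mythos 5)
The paper does not actually prove this statement: it is quoted as a known result and attributed to the reference \cite{w65}, with the surrounding text only recalling the definition of analyticity via the Taylor series \eqref{eq:taylor} built from Fr\'echet differentials. Your proposal therefore supplies a genuine proof where the paper offers a citation, and the proof you sketch is the standard and correct one: expand $\theta$ about $x_0$ and $\nu$ about $y_0=\theta(x_0)$ in series of continuous homogeneous polynomials, substitute, and justify the rearrangement by the scalar majorant $M\bigl(1-r^{-1}\varphi(t)\bigr)^{-1}$ with $\varphi(t)=M't/(s-t)$. This is consistent with the paper's Fr\'echet-differential notion of analyticity, and your closing remark via the chain rule is also legitimate (holomorphy and complex Fr\'echet differentiability coincide for maps between complex Banach spaces). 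One technical point deserves a sentence in a full write-up: when you regroup $P_n\bigl(\sum_m Q_m(h)\bigr)$ into homogeneous terms $R_j(h)$, the individual terms involve the symmetric $n$-linear polarization $\check P_n$ rather than $P_n$ itself, and $\|\check P_n\|$ exceeds $\|P_n\|$ by a factor up to $n^n/n!\le e^n$; the majorant argument survives after shrinking $r$ to $r/e$, but as stated your termwise domination bounds only the summed value $\sum_n P_n(k)$, not the rearranged homogeneous pieces. With that adjustment the argument is complete; it buys the reader a self-contained proof of a result the paper leaves to the literature.
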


Hence, if we can show that $\psi$ is an analytic function of $u$, we can conclude that $\psi(u)$ is an analytic function of $y_n$. To this end, we need the notion of analyticity for functions defined on general normed vector spaces, which we will now briefly recall.

Given normed vector spaces $X_1$ and $X_2$ and an infinitely Fr\`echet differentiable function $\theta :X_1 \rightarrow X_2$, we can define a Taylor series expansion of $\theta$ at the point $\xi$ in the following way \cite{chae}: 
\begin{equation}\label{eq:taylor}
T_{\theta,\xi}(x) = \sum_{j=0}^\infty \frac{1}{j!} \, d^j \theta(\xi) (x-\xi)^j,
\end{equation}
where $x, \xi \in X_1$, the notation $(x-\xi)^j$ denoting the $j$-tuple $(x-\xi,\dots,x-\xi)$ and $d^j \theta(\xi)$ denoting the $j$-linear operator corresponding to the $j$-th Fr\`echet differential $D^j \theta(\xi)$. The function $\theta$ is then said to be {\em analytic} in a set $Z \subset X_1$ if, for every $z \in Z$, $T_{\theta,z}(x) = \theta(x)$ for all $x$ in a neighbourhood $N_r(z) = \{x \in Z : \|x-z\|_{X_1} < r\}$, for some $r > 0$. 

A sufficient condition for $\theta$ to be analytic in a set $Z$ is thus that $\|d^j \theta(z)\| \leq C^j j!$ for all $z \in Z$ and some $C < \infty$, where $\|\cdot\|$ denotes the usual operator norm. Note that this condition is trivially satisfied if $\|d^j f(z)\| = 0$ for all $z \in Z$ and all $j \geq j^*$, for some $j^* \in \mathbb N$.

\begin{lemma}\label{lem:psi_analytic} Let the assumptions of Lemma \ref{lem:u_analytic} be satisfied. Suppose $\psi$, viewed as a mapping from $C^0(\Gamma_n^*; H^1_0(D))$ to $C^0(\Gamma_n^*; \mathbb R)$, admits an analytic extension to the set $\Sigma(u) \subseteq C^0(\Gamma_n^*; H^1_0(D; \mathbb C))$, and $u(z; y_n^*,x) \in \Sigma(u)$ for all $z \in \Sigma(n;\tau_n)$. Then, $\psi \circ u$, viewed as a mapping from $\Gamma_n$ to $C^0(\Gamma_n^*; \mathbb R)$ admits an anlytic extension to the set $\Sigma(n;\tau_n)$.
\end{lemma}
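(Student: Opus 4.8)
The plan is to derive the statement as an immediate consequence of the composition theorem of \cite{w65} recalled above, taking $\theta = u$ and $\nu = \psi$; the only real work is to set up the three normed spaces so that every hypothesis of that theorem matches an assumption already in hand.

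First I would fix the coordinate direction $n$, freeze $\bm y_n^* \in \Gamma_n^*$, and treat $y_n$ as a complex variable, so that $X_1 = \mathbb C$ with distinguished subset $\tilde X_1 = \Sigma(n;\tau_n)$. I take $X_2 = C^0(\Gamma_n^*; H^1_0(D))$, complexified to $C^0(\Gamma_n^*; H^1_0(D;\mathbb C))$, and $X_3 = C^0(\Gamma_n^*; \mathbb R)$. With these choices, the map $\theta : y_n \mapsto u(y_n, \bm y_n^*, \cdot)$ carries $X_1$ into $X_2$, and Lemma \ref{lem:u_analytic} supplies precisely what the composition theorem needs of $\theta$: an analytic extension to $\tilde X_1 = \Sigma(n;\tau_n)$, namely $z \mapsto u(z, \bm y_n^*, \cdot)$ with values in $C^0(\Gamma_n^*; H^1_0(D;\mathbb C))$.

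Next I would check the hypotheses on $\nu = \psi$. By assumption $\psi : X_2 \to X_3$ admits an analytic extension to $\tilde X_2 := \Sigma(u) \subseteq C^0(\Gamma_n^*; H^1_0(D;\mathbb C))$, which is the second hypothesis. The only remaining compatibility condition is $\theta(\tilde X_1) \subseteq \tilde X_2$, and this is exactly the hypothesis that $u(z, \bm y_n^*, \cdot) \in \Sigma(u)$ for all $z \in \Sigma(n;\tau_n)$. Having verified all three hypotheses, I would invoke the composition theorem to conclude that $\psi \circ u = \nu \circ \theta$, as a map from $\Gamma_n$ (regarded inside $\mathbb C$) into $C^0(\Gamma_n^*; \mathbb R)$, extends analytically to $\Sigma(n;\tau_n)$, which is the claim.

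I do not anticipate a genuine obstacle, since the analytic machinery is entirely delegated to the cited composition theorem. The one point demanding care is the bookkeeping of complexifications: I must ensure that the continuation of $u$ furnished by Lemma \ref{lem:u_analytic} is read as taking values in the complex Banach space $C^0(\Gamma_n^*; H^1_0(D;\mathbb C))$ on which $\psi$ is assumed analytic, so that the composition is well defined and the inclusion $\theta(\tilde X_1)\subseteq\tilde X_2$ is meaningful.
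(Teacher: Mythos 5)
Your proposal is correct and is exactly the argument the paper intends: Lemma \ref{lem:psi_analytic} is an immediate application of the composition theorem of \cite{w65} with $\theta = u$ (whose analytic extension to $\Sigma(n;\tau_n)$ is supplied by Lemma \ref{lem:u_analytic}) and $\nu = \psi$ (analytic on $\Sigma(u)$ by hypothesis), the compatibility condition $\theta(\Sigma(n;\tau_n)) \subseteq \Sigma(u)$ being precisely the stated assumption $u(z;\bm y_n^*,\mathbf{x}) \in \Sigma(u)$. Your care with the complexification $C^0(\Gamma_n^*;H^1_0(D;\mathbb C))$ matches the paper's setup, so there is nothing to add.
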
 

Together with Theorem \ref{theorem:SCerror}, now with $W = \mathbb R$, it then follows from Lemma \ref{lem:psi_analytic} that assumption {\bf F2} in Theorem \ref{thm:comp_func} is satisfied for the interpolation schemes considered in this section, provided the functional $\psi$ is an analytic function of $u$. Note that the function $\zeta$ in Theorem \ref{theorem:SCerror} acts on $\psi(u)$ instead of $u$ in this case, leading to optimal convergence rates in $h$ of the stochastic interpolation error.

To finish the analysis, we give some examples of functionals that satisfy the assumptions of Lemma \ref{lem:psi_analytic}.

\begin{example}\ {\em(}Bounded linear functionals{\em)} {\em In this case, there exists a constant $C_\psi$ such that $|\psi(w)| \leq C_\psi \|w\|_{H^1_0(D)}$ for all $w \in H^1_0(D)$ so that $\psi : C^0(\Gamma_n^*; H^1_0(D)) \rightarrow C^0(\Gamma_n^*; \mathbb R)$. Furthermore, for any $v,w \in C^0(\Gamma_n^*; H^1_0(D))$, we have 
\[
d \psi(v)(w_1) = \psi (w_1) \qquad \text{and} \qquad d^j \psi(v) \equiv 0 \quad \forall\, j \geq 2
\]
which implies that $\psi$ admits an analytic extension to all of $C^0(\Gamma_n^*; H^1_0(D; \mathbb C))$.
Examples of bounded linear functionals include point evaluations of the solution $u$ in one spatial dimension and local averages of the solution $u$ in some subdomain $D^* \subset D$, computed as $\frac{1}{|D^*|} \int_{D^*} u \mathrm{d}x$, in any spatial dimension.}
\end{example}

\begin{example}\ {\em(}Higher order moments of bounded linear functionals{\em)} {\em As a generalization of the above example, consider the functional $\psi(v) = \phi(v)^q$, for some bounded linear functional $\phi$ on $H^1_0(D)$ and some $q \in \mathbb N$. As before, it follows from the boundedness of $\phi$ that $\psi : C^0(\Gamma_n^*; H^1_0(D)) \rightarrow C^0(\Gamma_n^*; \mathbb R)$. For any $v \in C^0(\Gamma_n^*; H^1_0(D))$, the differentials of $\psi$ are 
\begin{align*}
&d^j \psi(v)(w_1,\dots,w_j) = \phi(v)^{q-j} \prod_{i=1}^j (q-i+1) \, \phi(w_i), &1 \leq j \leq q, \\
&d^j \psi(v) \equiv 0, & j \geq q+1,
\end{align*}
from which it follows that $\psi$ admits an analytic extension to all of $C^0(\Gamma_n^*; H^1_0(D; \mathbb C))$.}
\end{example}

\begin{example}\ {\em(}Spatial $L^2$-norm{\em)} {\em Consider the functional $\psi(v) = \int_D v^2 \mathrm{d}x = \|v\|_{L^2(D)}^2$. In this case, it follows from the Poincar\`e inequality that $\psi : C^0(\Gamma_n^*; H^1_0(D)) \rightarrow C^0(\Gamma_n^*; \mathbb R)$. For any $v \in C^0(\Gamma_n^*; H^1_0(D))$, the differentials of $\psi$ are 
\[
d\psi(v)(w_1) = 2 \int_D v w_1, \quad d^2\psi(v)(w_1,w_2) = 2 \int_D w_2 w_1, \quad \text{and} \quad d^j \psi(v) \equiv 0\,\,\, \forall\, j \geq 2
\]
which implies that $\psi$ admits an analytic extension to all of $C^0(\Gamma_n^*; H^1_0(D; \mathbb C))$.

For the functional $\psi(v) = \|v\|_{L^2(D)}$, we use the identity $a - b = (a^2-b^2)/(a+b)$ to derive that, for all $v \neq 0$, 
\[
d\psi(v)(w_1) = \frac{\int_D v w_1 \mathrm{d}x}{\|v\|_{L^2(D)}}, \quad d^2\psi(v)(w_1,w_2) = \frac{\int_D w_2 w_1}{\|v\|_{L^2(D)}}, \quad \text{and} \quad d^j \psi(v) \equiv 0 \,\,\, \forall\, j \geq 2.
\]
It then follows that $\psi$ admits an analytic extension to any subset $\Sigma(u) \subseteq C^0(\Gamma_n^*; H^1_0(D; \mathbb C))$ not containing $0$.
The analysis in this example can easily be extended to the functionals $\|v\|_{H^1_0(D)}$ and $\|v\|^2_{H^1_0(D)}$.}
\end{example}

%%%%%%%%%%%%%%%%%%%%%%%%%%%%%%%
\section{Numerical Examples}\label{sec:num}
%%%%%%%%%%%%%%%%%%%%%%%%%%%%%%
{
The aim of this section is to demonstrate numerically the significant reductions in computational cost possible with the use of the MLSC approach.
As an example, consider the following boundary value problem on either $D=(0,1)$ or $D=(0,1)^2$:
\begin{equation}
\label{modnum1}
\left\{
\begin{array}{rll}
-\nabla \cdot \left(a({\bm y}, \mathbf{x}) \nabla u({\bm y}, \mathbf{x})\right) &= 1 \ &\quad \mathrm{for} \ \mathbf{x} \in D \\
u({\bm y},\mathbf{x}) &= 0 &\quad \mathrm{for} \ \mathbf{x} \in \partial D.
\end{array}
\right.
\end{equation}
The coefficient $a$ takes the form
\begin{equation}\label{eq:coeff_num}
a({\bm y}, \mathbf{x}) = 0.5 + \exp\left[ \sum_{n=1}^N \sqrt{\lambda_n} b_n(\mathbf{x}) y_n  \right],
\end{equation}
where $\{y_n\}_{n \in \mathbb N}$} is a sequence of independent, uniformly distributed random variables on [-1,1] and $\{\lambda_n\}_{n \in \mathbb N}$ and $\{b_n\}_{n \in \mathbb N}$ are the eigenvalues and eigenfunctions of the covariance operator with kernel function $C(x,x') = \exp[-\|\mathbf{x}-\mathbf{x}'\|_1]$. Explicit expressions for $\{\lambda_n\}_{n \in \mathbb N}$ and $\{b_n\}_{n \in \mathbb N}$ are computable \cite{Ghanem_Spanos_1991}. In the case $D=(0,1)$, we have
\[
\lambda_n^{1\mathrm D} = \frac{2}{w_n^2+1}\quad\mbox{and}\quad b_n^{1\mathrm D}(\mathbf{x}) = A_n (\sin (w_n \mathbf{x}) + w_n \cos (w_n \mathbf{x})) \quad \text{for all } n \in \mathbb N, 
\]
where $\{w_n\}_{n \in \mathbb N}$ are the (real) solutions of the transcendental equation
\[
\tan(w) = \frac{2\,w}{w^2-1}
\]
and the constant $A_n$ is chosen so that $\|b_n\|_{L^2(0,1)}=1$. In two spatial dimensions, with $D=(0,1)^2$, the eigenpairs can be expressed as
\[
\lambda_n^{2\mathrm D} = \lambda_{i_n}^{1\mathrm D} \, \lambda_{j_n}^{1\mathrm D}\quad\mbox{and}\quad b_n^{2\mathrm D} = b_{i_n}^{1\mathrm D} \, b_{j_n}^{1\mathrm D}
\]
for some $i_n, j_n \in \mathbb N$. In both one and two spatial dimensions, the eigenvalues $\lambda_n$ decay quadratically with respect to $n$  \cite{charrier12}.

It is shown in \cite[Example 3]{Babuska:2007} that with the coefficient $a$ of the form \eqref{eq:coeff_num} and $f = 1$, the assumptions of Lemma \ref{lem:u_analytic} are satisfied with $\gamma_n = \sqrt{\lambda_n} \|b_n\|_{L^\infty(D)}$.

For spatial discretization, we use continuous, piecewise-linear finite elements on uniform triangulations of $D$, starting with a mesh width of $h=1/2$. As interpolation operators, we choose the (isotropic) sparse grid interpolation operator
\eqref{eq:SG}, using $p$ and $g$ given by the classic Smolyak approximation in Table \ref{sgex},
based on Clenshaw-Curtis abscissas; see Section \ref{sec:SC}. 

The goal of the computations is to estimate the error in the expected value of a functional $\psi$ of the solution of \eqref{modnum1}. For fair comparisons, all values of $\varepsilon$ reported are relative accuracies, i.e., we have scaled the errors by the value of $\mathbb E[\psi(u)]$ itself. We consider two different settings: in Section \ref{ssec:num1}, we consider problem \eqref{modnum1} in two spatial dimensions with $N=10$ random variables whereas, in Sections \ref{ssec:num2} and \ref{ssec:num3}, we work in one spatial dimension with $N=20$ random variables. Because the exact solution $u$ is unavailable, the error in the expected value of $\psi(u)$ has to be estimated. In Sections \ref{ssec:num1} and \ref{ssec:num2}, we compute the error with respect to an ``overkilled'' reference solution obtained using a fine mesh spacing $h^*$ and high interpolation level $L^*$. However, because this is generally not feasible in practice, we show in Section \ref{ssec:num3} how the error can be estimated when the exact solution is not available and one cannot compute using a fine spatial mesh and high stochastic interpolation level. The cost of the multilevel estimators is computed as discussed in Section \ref{ssec:comp_analysis} and Remark \ref{rem:canc}, with $\gamma=d$, i.e., by assuming the availability of an optimal linear solver. For non-optimal linear solvers for which $\gamma > d$, the savings possible with the multilevel approach will be even greater than demonstrated below.

%%%%%%%%%%%%%%%%%
\subsection{$\mathbf{d=2, N=10}$}\label{ssec:num1}

As the quantity of interest, we choose the average value of $u$ in a neighborhood of the midpoint $(1/2,1/2)$, computed as $\psi(u) = \frac{1}{|D^*|} \int_{D^*} u(\mathbf{x}) \mathrm{d}\mathbf{x}$, where $D^*$ denotes the union of the six elements adjacent to the node located at $(1/2,1/2)$ of the uniform triangular mesh with mesh size $h=1/256$.

We start by confirming, in Figure \ref{fig:2d_1}, the assumptions of Theorem \ref{thm:comp_func}. The reference values are computed with spatial mesh width $h^*=1/256$ and stochastic interpolation level $L^*=5$. 

\begin{figure}[h!]
\centering
\includegraphics[width=0.45\textwidth]{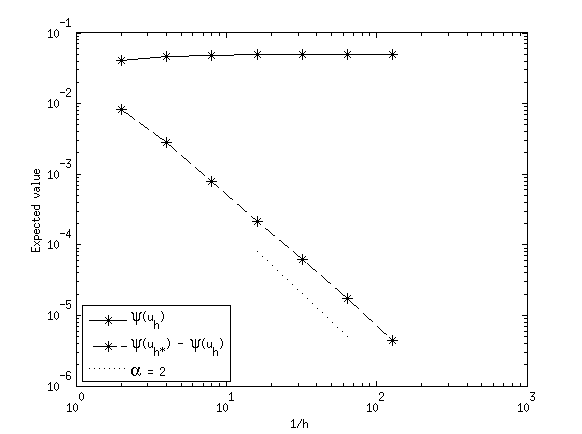}\includegraphics[width=0.45\textwidth]{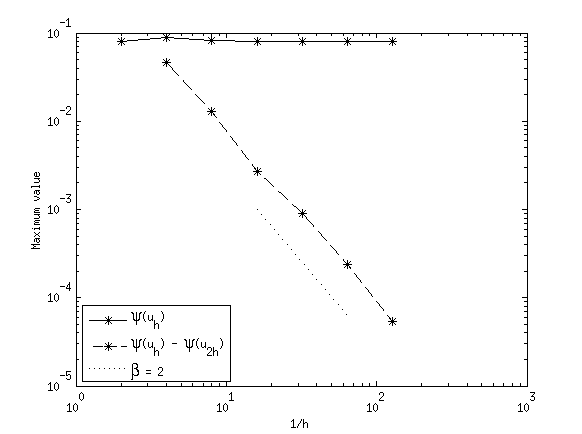}
\includegraphics[width=0.45\textwidth]{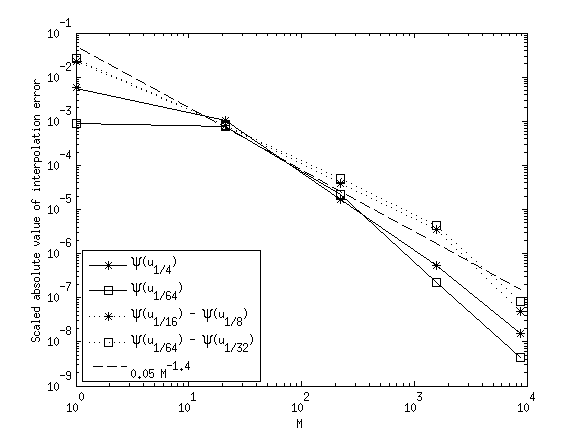}\includegraphics[width=0.45\textwidth]{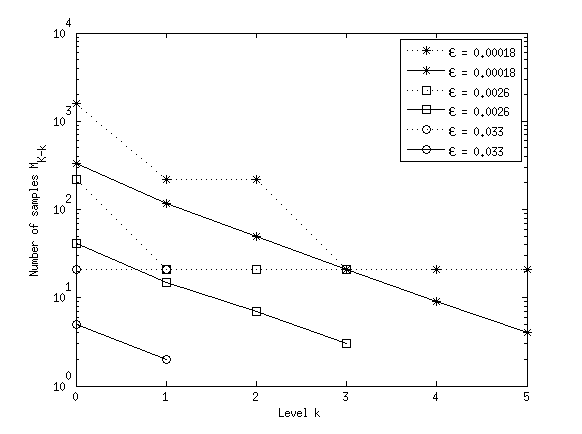}
\caption{$D=(0,1)^2$, $N=10$. Top left: $\mathbb E[\mathcal I_5 \psi(u_h)]$ and $\mathbb E[\mathcal I_5 \psi(u_{1/256}) - \mathcal I_5 \psi(u_h)]$ versus $1/h$. Top right: $\zeta(\psi(u_h))$ and $\zeta(\psi(u_h) - \psi(u_{2h}))$ versus $1/h$. Bottom left: $\mathbb E[\mathcal I_5 \psi(u_h) - \mathcal I_l \psi(u_h)]/h_0^2$ and $[\mathcal I_5 (\psi(u_h) - \psi(u_{2h}))- \mathcal I_l (\psi(u_h)- \psi(u_{2h})]/h^2$ versus $M_l$, for various $h$. Bottom right: number of samples $M_{K-k}$ versus $k$.}
\label{fig:2d_1}
\end{figure}

The top-left plot of Figure \ref{fig:2d_1} shows the convergence of the finite element error in the expected value of $\psi(u)$, and confirms that assumption {\bf F1} of Theorem \ref{thm:comp_func} holds with $\alpha = 2$. 

The top-right plot of Figure \ref{fig:2d_1} shows the behavior of the function $\zeta$ from Theorem \ref{theorem:SCerror} for the quantities $\psi(u_h)$ and $\psi(u_h) - \psi(u_{2h})$. We see that whereas $\zeta(\psi(u_h))$ is bounded by a constant independent of $h$ and $\zeta(\psi(u_h) - \psi(u_{2h}))$ decays quadratically in $h$. This confirms assumption {\bf F3} with $\beta=2$.  

The bottom-left plot of Figure \ref{fig:2d_1} shows the interpolation error in $\psi(u_{h})$ scaled by $h_0^2$ and the interpolation error in $\psi(u_{h}) - \psi(u_{2h})$ scaled by $h^2$ for several  values of $h$. According to assumptions {\bf F2} and {\bf F3}, these plots should all result in a straight line $C M^{-\mu}$, where $C=C_I C_\zeta$. The best fit which has $C=0.05$ and $\mu=1.4$ is added for comparison. 

The bottom-right plot of Figure \ref{fig:2d_1} shows the number of samples $M_k$ computed using the formula \eqref{eq:opt_eta}, with $C=0.05$ and $\mu=1.4$, for several values of $\varepsilon$. The finest level $K$ was determined using the estimates on the finite element error from the top-left plot. Solid lines correspond to numbers rounded up to the nearest integer, as is done in \eqref{eq:opt_etar}, whereas dotted lines correspond to the number of samples rounded up to the next level of the sparse grid. 

In Figure \ref{fig:2d_2}, we study the cost of the standard and multilevel collocation methods to achieve a given total accuracy $\varepsilon$. In both plots, the data labeled `SC' and `MLSC' denote standard and multilevel stochastic collocation, respectively. For data labeled `formula', the number of samples was determined by the formula \eqref{eq:opt_eta} with $C=0.05$ and $\mu=1.4$, rounded up to the next sparse grid level (the dotted lines in the bottom right plot of Figure \ref{fig:2d_1}). For data labeled `best', the number of samples was chosen manually so as to achieve a total accuracy $\varepsilon$ for the smallest computational cost. For all methods, we chose $h_0=1/4$.

\begin{figure}[h!]
\centering
\includegraphics[width=0.45\textwidth]{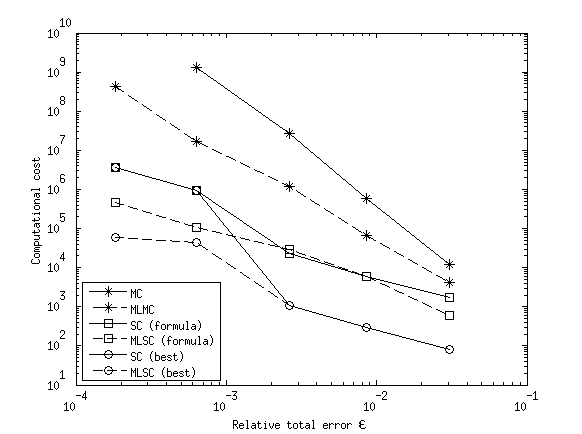}\includegraphics[width=0.45\textwidth]{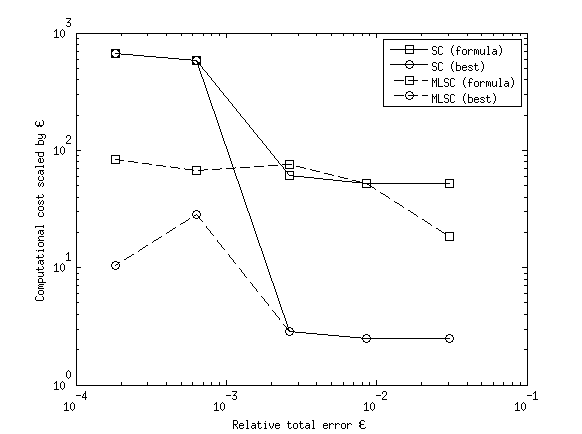}
\caption{$D=(0,1)^2$, $N=10$. Left: computational cost versus relative error $\varepsilon$. Right: computational cost scaled by $\varepsilon^{-1.36} $ versus relative error $\varepsilon$.}
\label{fig:2d_2}
\end{figure}

In the left plot of Figure \ref{fig:2d_2}, we simply plot the computational cost of the different estimators against $\varepsilon$. For comparison, we have also added corresponding results for Monte Carlo (MC) and multilevel Monte Carlo (MLMC) estimators. In both the `formula' and the `best' case, the multilevel collocation method outperforms standard SC. Both collocation-based methods outperform both Monte Carlo approaches.

In the right plot in Figure \ref{fig:2d_2}, we compare the observed computational cost with that predicted by Theorem \ref{thm:comp_func} for the standard and multilevel collocation methods. In our computations, we observed $\alpha \approx 2$, $\beta \approx 2$, and $\mu \approx 1.4$, which with $\gamma=2$ gives computational costs of $\varepsilon^{-1}$ and $\varepsilon^{-1.72}$ for the multilevel and standard SC method, respectively. We therefore plot the computational cost scaled by $\varepsilon^{1}$. We see that both multilevel methods indeed seem to grow approximately like $\varepsilon^{-1}$, with the `formula' case growing slightly faster for large value of $\varepsilon$ and the `best' case growing slightly faster for small values of $\varepsilon$. The costs for both standard collocation methods grow a lot faster with $\varepsilon$.

Figure \ref{fig:2d_3} provides results for a different quantity of interest, $\psi(u) = \|u\|_{L^2(D)}$. The left plot corresponds to the bottom-left plot in Figure \ref{fig:2d_1} and again confirms that the interpolation error in $\psi(u_{h}) - \psi(u_{2h})$ scales with $h^2$. The right plot corresponds to the left plot of Figure \ref{fig:2d_2}, where we plot the computational cost of the different estimators against $\varepsilon$. We see that all collocation-based methods outperform the Monte Carlo approaches. In both the `formula' and the `best' case, the multilevel collocation method again outperforms standard SC.

\begin{figure}[h!]
\centering
\includegraphics[width=0.45\textwidth]{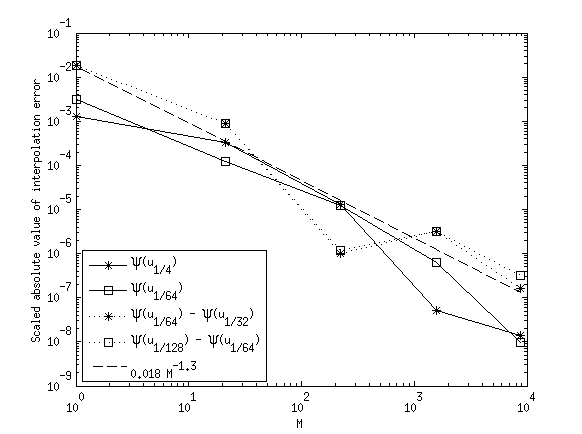}\includegraphics[width=0.45\textwidth]{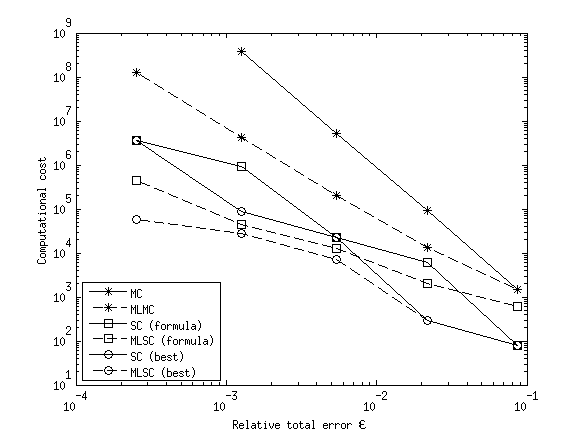}
\caption{$D=(0,1)^2$, $N=10$. Left: $\mathbb E[\mathcal I_5 \psi(u_h) - \mathcal I_{M_k} \psi(u_h)]/h_0^2$ and $[\mathcal I_5 (\psi(u_h) - \psi(u_{2h}))- \mathcal I_{M_k}(\psi(u_h)- \psi(u_{2h})]/h^2$ versus $M_k$, for various $h$. Right: computational cost versus relative error $\varepsilon$.}
\label{fig:2d_3}
\end{figure} 

\begin{remark}\
Before considering the second model problem, let us briefly comment on the differences between the `best' and the `formula' multilevel methods. The `formula' multilevel collocation method performs sub-optimally mainly for two reasons. First, it always rounds up the number of samples $M_k$ to the nearest sparse grid level, which may be substantially higher than the number of samples actually required. Secondly, it does not take into account sign changes in the interpolation error, which in practice can lead to significant reductions in the interpolation error of the multilevel method. For both of these reasons, the interpolation error is often a lot smaller than the required $\varepsilon/2$, leading to sub-optimal performance. This issue is partly addressed in Section \ref{ssec:num3}, where we consider not always rounding up, but rounding the number of samples either up or down to the nearest sparse grid level.
\end{remark}

%%%%%%%%%%%%%%%%%%%%%%%%%%%%
\subsection{$\mathbf{d=1, N=20}$}\label{ssec:num2}

We now repeat the numerical tests done in the previous section for the case $D=(0,1)$ and $N=20$. For the quantity of interest, we choose the expected value of the solution $u$ evaluated at $x=\frac34$. The reference values are computed using the mesh width $h^*=1/1024$ and interpolation level $L^*=5$. 

We again start by confirming, in Figure \ref{fig:1d_1}, the assumptions of Theorem \ref{thm:comp_func}. The four plots of that figure convey the same information as do the corresponding plots in Figure \ref{fig:2d_1} and again confirm assumptions {\bf F1}, {\bf F2}, and {\bf F3} of that theorem with $\alpha = 2$ and $\beta=2$ and, in the bottom-right plot, the best line fit $C=C_I C_\zeta$ with $C=0.005$ and $\mu=0.8$.

\begin{figure}[h!]
\centering
\includegraphics[width=0.45\textwidth]{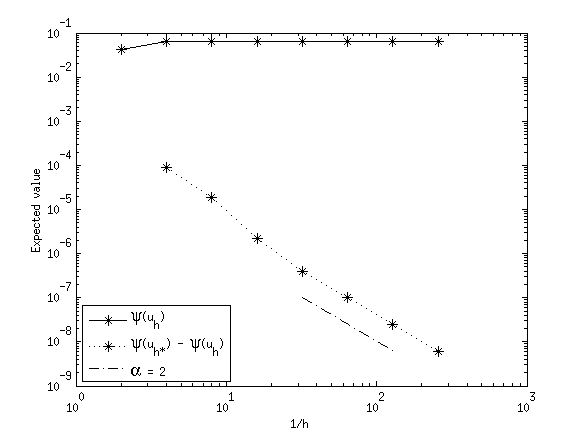}\includegraphics[width=0.45\textwidth]{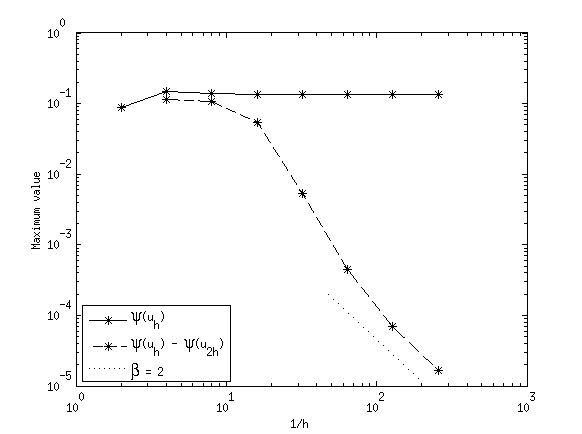}
\includegraphics[width=0.45\textwidth]{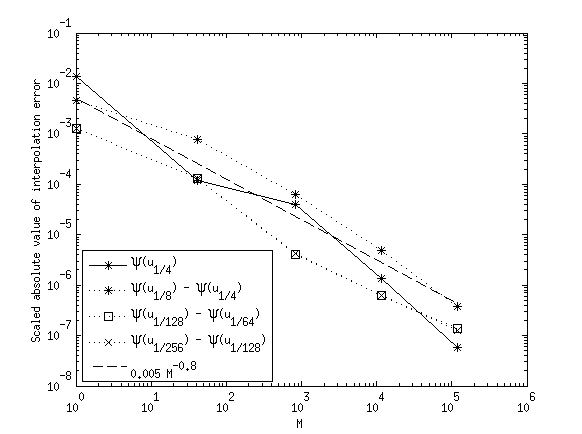}\includegraphics[width=0.45\textwidth]{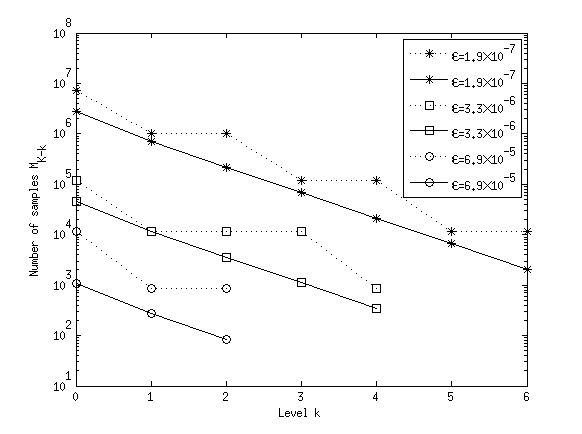}
\caption{$D=(0,1)$, $N=20$. Top left: $\mathbb E[\mathcal I_5 \psi(u_h)]$ and $\mathbb E[\mathcal I_5 \psi(u_{1/256}) - \mathcal I_5 \psi(u_h)]$ versus $1/h$. Top right: $\zeta(u_h)$ and $\zeta(u_h - u_{2h})$ versus $1/h$. Bottom left: $\mathbb E[\mathcal I_5 \psi(v) - \mathcal I_{M_k} \psi(v)]/\zeta(v)$ versus $M_k$ for various $v$. Bottom right: number of samples $M_{K-k}$ versus $k$.}
\label{fig:1d_1}
\end{figure}

Figure \ref{fig:1d_2} conveys the same information and uses the same labeling as does Figure \ref{fig:2d_2}. Again, for both the `formula' and `best' cases, the multilevel collocation method eventually outperforms standard SC and both collocation-based methods also outperform the Monte Carlo approaches. Based on the values $\alpha \approx 2$, $\beta \approx 2$, and $\mu \approx 0.8$, Theorem \ref{thm:comp_func} now predicts the computational costs of $\varepsilon^{-1.25}$ and $\varepsilon^{-1.75}$ for the multilevel and the standard collocation methods, respectively. The right-plot in Figure \ref{fig:1d_2} indicates that the `formula' multilevel collocation method indeed seems to grow like $\varepsilon^{-1.25}$ whereas the `best' multilevel method actually seems to grow slower for small values of $\varepsilon$.  This is likely due to the different signs of the interpolation errors in the multilevel estimator. Also, again, the costs for both standard collocation methods grow a lot faster with $\varepsilon$.

\begin{figure}[h!]
\centering
\includegraphics[width=0.45\textwidth]{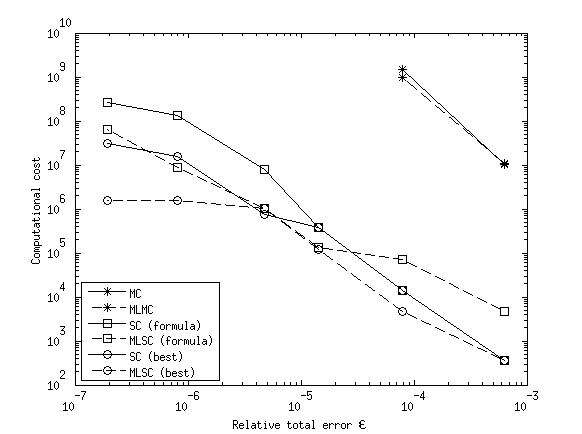}\includegraphics[width=0.45\textwidth]{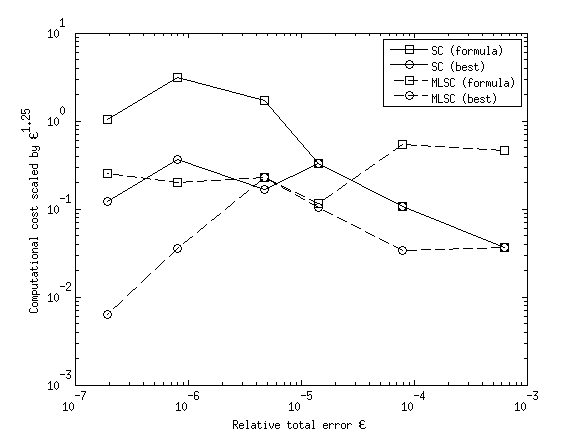}
\caption{$D=(0,1)$, $N=20$. Left: computational cost versus relative error $\varepsilon$. Right: computational cost scaled by $\varepsilon^{-1.25} $ versus relative error $\varepsilon$.}
\label{fig:1d_2}
\end{figure}

%%%%%%%%%%%%%%%%%%%%%%%%%%%
\subsection{Practical implementation}\label{ssec:num3}

In Sections \ref{ssec:num1} and \ref{ssec:num2}, the accuracy of the computed estimates was assessed by comparison to a reference solution. Of course, in practice, a fine-grid, high-level reference solution is not available. Therefore, in this section, we describe how to implement the MLSC method without having recourse to a reference solution. We suggest the following practical strategy that is  similar to the one proposed in \cite{Giles:2008}.

\begin{itemize}
\item[1.] Estimate the constants $\alpha$, $\beta$, $\mu$, and $C = C_I \, C_\zeta$.
%Estimate the convergence rate $\alpha$ from $\mathcal I_1 \psi(u_0)$, $\mathcal I_1 \psi(u_1)$ and $\mathcal I_1 \psi(u_2)$.
%\item[2.] Estimate the constant $C_I$ and the convergence rate $\mu$ from 
%\begin{itemize} 
%\item $\zeta(u_0)$ and $\zeta(u_1 - u_0)$,
%\item $\mathcal I_0 \psi(u_0)$, $\mathcal I_1 \psi(u_0)$ and $\mathcal I_2 \psi(u_0)$,
%\item $\mathcal I_0 (\psi(u_0)-\psi(u_1))$, $\mathcal I_1 (\psi(u_0)-\psi(u_1))$ and $\mathcal I_2 (\psi(u_0)-\psi(u_1))$.
%\end{itemize}
\item[2.] Start with $K=1$.
\item[3.] Calculate the optimal number of samples $M_k, k=0,\dots,K$, according to the formula \eqref{eq:opt_eta}, and round to the nearest sparse grid level. 
\item[4.] Test for convergence using
\[
\mathbb E[\psi(u_{h_k}) - \psi(u_{h_{k-1}})] = (\eta^\alpha -1) \, \mathbb E[(\psi(u)-\psi(u_{h_k}))].
\]
\item[5.] If not converged, set $K=K+1$ and return to step 3. 
\end{itemize}

\noindent Note that in this procedure, steps 3 and 4 ensure that the interpolation error and the spatial discretization error are each less than the required tolerance $\varepsilon/2$, respectively.

The estimation of the constants $\alpha$, $\beta$, $\mu$, and $C$ in step 1 can be done relatively cheaply from computations done using mesh widths $h_0$, $h_1$, and $h_2$ and interpolation levels $k=0,1,2$. For the results provided below, we estimated the convergence rate $\alpha$ from the level 1 interpolants $\mathcal I_{1}$ of $\psi(u_0)$, $\psi(u_1)$, and $\psi(u_2)$, resulting in $\alpha\approx2.1$. In light of the results in Section \ref{sec:SC}, we assumed $\beta=\alpha$. We then used the first three interpolation levels of $\psi(u_0)$ and $\psi(u_1)-\psi(u_0)$ to obtain the estimates $C\approx0.01$ and $\mu\approx0.8$. Note that the value of $\mu$ is the same as in Section \ref{ssec:num2} whereas the value of the constant $C$ is slightly larger. This is due to the fact that, for the large values of $h$ used to estimate this constant, the function $\zeta(\psi(u_h) - \psi(u_{2h}))$ has probably not yet settled into its asymptotic quadratic decay.

As mentioned in Section \ref{ssec:num1}, always rounding the number of samples resulting from formula \eqref{eq:opt_eta} up to the next sparse grid level may lead to a substantial increase in the computational cost and hence a sub-optimal performance of the multilevel method. In practice, one might therefore consider not always rounding up, but instead rounding either up or down. As long as we do not round down more frequently than we round up, or at least not much more often, this approach should still result in an interpolation error below the required tolerance $\varepsilon/2$.

Table \ref{tbl:1} shows the number of samples $M_{K-k}$ resulting from the implementation described in this section for the model problem with $d=1$ and $N=20$ from Section \ref{ssec:num2}. For each value of $\varepsilon$, the first row, denoted by `formula', corresponds to the numbers $M_{K-k}$ resulting from formula \eqref{eq:opt_eta} rounded up to the nearest integer. The second row, denoted `up', are the numbers in the first row rounded up to the next corresponding sparse grid level. For the final row, denoted `up/down', the rounding of the number of samples was done in the following way: First, all numbers were rounded either up or down to the nearest corresponding sparse grid level. If this resulted in more numbers being rounded down than up, we chose the number that was rounded down by the largest amount and then instead rounded this number up. This procedure was continued iteratively. The same was done when more numbers were rounded up than down.

\begin{table} [h!]	
\begin{center}
\renewcommand{\arraystretch}{1.25}
\begin{tabular}{ |c|| c |c c c c c  |} \hline
 $\varepsilon$ & level & 0& 1& 2& 3& 4\\ \hline\hline
\multirow{3}{*}{6.3e-4} & formula & 191& 48& 15& &   \\
		      & up & 841& 841& 41& &   \\
		      & up/down & 841& 41& 41& &  \\ \hline
\multirow{3}{*}{7.9e-5} & formula & 3002& 747& 233& 73&   \\
		      & up & 11561& 841& 841& 841&   \\
		      & up/down & 841& 841& 841& 41&   \\ \hline
\multirow{3}{*}{1.4e-5} & formula & 27940& 6949& 2169& 677& 212  \\
		      & up & 120401& 11561& 11561& 841& 841  \\
		      & up/down & 11561& 11561& 841& 841& 841  \\ \hline
\multirow{3}{*}{4.7e-6} & formula & 110310& 27433& 8562& 2672& 834 \\
		      & up & 120401& 120401& 11561& 11561& 841 \\
		      & up/down & 120401& 11561& 11561& 11561& 841 \\ \hline
\end{tabular}
\end{center}
\label{tbl:1}
\caption{$D=(0,1)$, $N=20$. Number of samples $M_{K-k}$ computed using formula \eqref{eq:opt_eta} and various rounding schemes.}
\end{table}

To confirm that the adaptive procedure still achieves the required tolerance on the total error, we have, for Table \ref{tbl:2}, computed the stochastic interpolation and finite element errors (with respect to a reference solution) and the computational cost of the multilevel approximations from Table \ref{tbl:1}. For comparison, we have added the results for the multilevel method which was manually found to give a total error less than $\varepsilon$ at minimal cost, which was already computed in Section \ref{ssec:num2} assuming a reference solutions was available. Note that for large values of $\varepsilon$, the adaptive procedure described in this section overestimated the finite element error, leading to a larger number of levels $K$ compared to that found in Section \ref{ssec:num2}. It is clear from Table \ref{tbl:2} that not only does the alternative rounding procedure yield the required bound on the error, it also significantly reduces the computational cost of the multilevel method, bringing it close to what was manually found to be the minimal cost possible.

\begin{table} [h!]	
\begin{center}
\renewcommand{\arraystretch}{1.25}
\begin{tabular}{ |c| c |c c c|} \hline
 $\varepsilon$ & & Interpolation error& Spatial error &Cost \\ \hline\hline
\multirow{2}{*}{6.3e-4}  & up &  6.7e{-5}& 3.4e{-5}& 8266\\
		      & up/down & 2.8e{-4}& 3.4e{-5}& 4902\\ 
		& best &  8.0e{-5}&  2.9e{-4}&  369\\\hline
\multirow{2}{*}{7.9e-5}   & up &  2.2e{-5}&  6.3e{-6}& 85558\\
		      & up/down &  3.0e{-5}&  6.3e{-6}& 15650\\ 
		& best &  2.4e{-5}&  3.4e{-5}& 4591\\\hline
\multirow{2}{*}{1.4e-5}     & up &  2.7e{-6}&  1.6e{-6}& 853207\\
		      & up/down & 8.3e{-6}&  1.6e{-6}&158714\\ 
		& best &  3.9e{-6}&  6.3e{-6}& 119699\\\hline
\multirow{2}{*}{4.7e-6}   & up & 7.3e{-8}& 1.6e{-6} &1519787\\
		      & up/down &1.2e{-6}&  1.6e{-6} &1038183\\ 
		& best &  1.2e{-6}&  1.6e{-6} &1038183\\\hline
\end{tabular}
\end{center}
\label{tbl:2}
\caption{$D=(0,1)$, $N=20$. Stochastic interpolation and spatial errors (with respect to the reference solution) and computational cost of various multilevel methods.}
\end{table}

%%%%%%%%%%%%%%%%%%%%%%%%%%%%%%%
\section{Concluding remarks}
%%%%%%%%%%%%%%%%%%%%%%%%%%%%%%%

Computing solutions of stochastic partial differential equations using stochastic collocation methods can become prohibitively expensive as the dimension of the random parameter space increases. Drawing inspiration from recent work in multilevel Monte Carlo methods, this work proposed a multilevel stochastic collocation method, based on a hierarchy of spatial and stochastic approximations. A detailed computational cost analysis showed, in all cases, a sufficient improvement in costs compared to single-level methods. Furthermore, this work provided a framework for the analysis of a multilevel version of any method for SPDEs in which the spatial and stochastic degrees of freedom are decoupled.

The numerical results practically demonstrated this significant decrease in complexity versus single level methods for each of the problems considered. Likewise, the results for the model problem showed multilevel SC to be superior to multilevel MC even up to $N=20$ dimensions. 

One of the largest obstacles to the practicality of stochastic collocation methods is the huge growth in the number of points between grid levels. In the multilevel case, this can lead to a large amount of computational inefficiency. Certain simple rounding schemes were proposed to mitigate this effect, and proved to be extremely effective for the problems considered. Similarly, since most of our example problems involved computation of a reference solution for the estimation of the necessary constants, a more practical multilevel stochastic collocation algorithm that dispensed with the need for a reference solution was proposed and tested.

It is clear that for any sampling method for SPDEs, whether Monte Carlo or stochastic collocation,  multilevel methods are to be preferred over single-level methods for improved efficiency. Especially in the case of stochastic collocation methods, multilevel approaches enable one to further delay the curse of dimensionality, tempering the explosion of computational effort that results when the stochastic dimension increases. Though Monte Carlo methods are often preferable for problems involving a large stochastic dimension, multilevel approaches greatly improve the effectiveness of stochastic collocation methods versus Monte Carlo methods.

\section*{Acknowledgements}
The first and the fourth authors are 
supported by the US Department of Defense Air Force Office of Scientific Research (AFOSR) under grant number FA9550-11-1-0149 and by he US Department of Energy Advance Simulation Computing Research (ASCR) program under grant number DE-SC0010678.

The second and third authors are supported by the US AFOSR under grant number 1854-V521-12 and by the US Department of Energy ASCR under grant number ERKJ259.  Also supported by the Laboratory Directed Research and Development (LDRD) Program at the Oak Ridge National Laboratory (ORNL).
The ORNL is operated by UT-Battelle, LLC, for the US Department of
Energy under Contract DE-AC05-00OR22725.

\bibliographystyle{siam}
\bibliography{MLSCbib}{}

\begin{thebibliography}{10}

\bibitem{Babuska:2007}
{\sc Ivo Babu\v{s}ka, Fabio Nobile, and Raul Tempone}, {\em {A Stochastic
  Collocation Method for Elliptic Partial Differential Equations with Random
  Input Data}}, SIAM Journal on Numerical Analysis, 45 (2007), pp.~1005--1034.

\bibitem{Barth:2011gz}
{\sc Andrea Barth, Christoph Schwab, and Nathaniel Zollinger}, {\em
  {Multi-level Monte Carlo Finite Element method for elliptic PDEs with
  stochastic coefficients}}, Numerische Mathematik, 119 (2011), pp.~123--161.

\bibitem{brenner_scott}
{\sc S.C. Brenner and L.R. Scott}, {\em The Mathematical Theory of Finite
  Element Methods}, vol.~15 of Texts in Applied Mathematics, Springer,
  third~ed., 2008.

\bibitem{charrier12}
{\sc J.~Charrier}, {\em Strong and weak error estimates for the solutions of
  elliptic partial differential equations with random coefficients}, SIAM J.
  Numer. Anal, 50 (2012), pp.~216--246.

\bibitem{Charrier:2013jv}
{\sc J~Charrier, R~Scheichl, and A~L Teckentrup}, {\em {Finite element error
  analysis of elliptic PDEs with random coefficients and its application to
  multilevel Monte Carlo methods}}, SIAM Journal on Numerical Analysis, 51
  (2013), pp.~322--352.

\bibitem{ciarlet}
{\sc P.~G. Ciarlet}, {\em The {F}inite {E}lement {M}ethod for {E}lliptic
  {P}roblems}, North--Holland, 1978.

\bibitem{Cliffe:2011}
{\sc KA~Cliffe, MB~Giles, Robert Scheichl, and Aretha~L Teckentrup}, {\em
  Multilevel monte carlo methods and applications to elliptic pdes with random
  coefficients}, Computing and Visualization in Science, 14 (2011), pp.~3--15.

\bibitem{dh11}
{\sc S.~Dereich and F.~Heidenreich}, {\em A multilevel {M}onte {C}arlo
  algorithm for {L}{\'e}vy-driven stochastic differential equations},
  Stochastic Process. Appl., 121 (2011), pp.~1565--1587.

\bibitem{fst05}
{\sc P.~Frauenfelder, Ch. Schwab, and R.A. Todor}, {\em Finite elements for
  elliptic problems with stochastic coefficients}, Comput. Methods Appl. Mech.
  Engrg., 194 (2005), pp.~205--228.

\bibitem{Ghanem_Spanos_1991}
{\sc R.~G. Ghanem and P.~D. Spanos}, {\em Stochastic finite elements: a
  spectral approach}, Springer-Verlag, New York, 1991.

\bibitem{gr12}
{\sc M.B. Giles and C.~Reisinger}, {\em Stochastic finite differences and
  multilevel {M}onte {C}arlo for a class of {SPDE}s in finance}, SIFIN, 1
  (2012), pp.~575--592.

\bibitem{Giles:2008}
{\sc Michael~B Giles}, {\em Multilevel monte carlo path simulation}, Operations
  Research, 56 (2008), pp.~607--617.

\bibitem{gkss11}
{\sc C.J. Gittelson, J.~K\"onn\"o, Ch. Schwab, and R.~Stenberg}, {\em The
  multilevel {M}onte {C}arlo finite element method for a stochastic {B}rinkman
  problem}, Numer. Math.,  (published online March 2013).

\bibitem{Harbrecht:2013}
{\sc Helmut Harbrecht, Michael Peters, and Markus Siebenmorgen}, {\em On
  multilevel quadrature for elliptic stochastic partial differential
  equations}, in Sparse Grids and Applications, Springer, 2013, pp.~161--179.

\bibitem{Heinrich:2001}
{\sc Stefan Heinrich}, {\em Multilevel monte carlo methods}, in Large-scale
  scientific computing, Springer, 2001, pp.~58--67.

\bibitem{hsst12}
{\sc H.~Hoel, E.~von Schwerin, A.~Szepessy, and R.~Tempone}, {\em Adaptive
  multilevel {M}onte {C}arlo simulation}, in Numerical {A}nalysis of Multiscale
  {C}omputations, B.~Engquist, O.~Runborg, and Y.-H.~R. Tsai, eds., vol.~82 of
  Lecture Notes in Comput. Sci., Springer, 2012, pp.~217--234.

\bibitem{Nobile:2008_1}
{\sc F.~Nobile, R.~Tempone, and C.~Webster}, {\em A sparse grid stochastic
  collocation method for partial differential equations with random input
  data}, SIAM J. Numer. Anal., 46 (2008), pp.~2309--2345.

\bibitem{Nobile:2008_2}
{\sc F.~Nobile, R.~Tempone, and C.~G. Webster}, {\em An anisotropic sparse grid
  stochastic collocation method for partial differential equations with random
  input data}, SIAM J. Numer. Anal., 46 (2008), pp.~2411--2442.

\bibitem{tsgu13}
{\sc A.~L. Teckentrup, R.~Scheichl, M.~B. Giles, and E.~Ullmann}, {\em Further
  analysis of multilevel {M}onte {C}arlo methods for elliptic {PDE}s with
  random coefficients}, Numer. Math.,  (published online March 2013).

\end{thebibliography}

\end{document}